\documentclass[11pt,a4paper,english]{article}

\makeatletter
\renewcommand\@biblabel[1]{}
\makeatother
\newcommand{\beq}{\begin{equation}}
\newcommand{\eeq}{\end{equation}}

\usepackage[latin1]{inputenc}
\usepackage{babel}
\usepackage[centertags]{amsmath}
\usepackage{amsfonts}
\usepackage{amssymb}
\usepackage{color}
\usepackage{amsthm}
\usepackage{newlfont}
\usepackage{graphicx}
\usepackage{dsfont}
\usepackage{authblk}

\setlength{\textheight}{21.5cm} 

\setlength{\textwidth}{16cm} 

\setlength{\voffset}{-1cm}

\setlength{\hoffset}{-1,8cm} 

\DeclareMathOperator*{\essinf}{ess\,inf}

\newtheorem{theorem}{Theorem}[section]
\newtheorem{lemma}[theorem]{Lemma}

\newtheorem{proposition}[theorem]{Proposition}
\newtheorem{definition}[theorem]{Definition}
\newtheorem{remark}[theorem]{Remark}
\newtheorem{Assumptions}[theorem]{Assumption}

\pagestyle{myheadings}
\markright{ \sc }

\makeatletter  
\@addtoreset{equation}{section}

\makeatother  
\setcounter{section}{0}
\setcounter{equation}{0}
\begin{document}
\title{\textbf{Optimal Dynamic Procurement Policies \\ for a Storable Commodity \\ with L\'{e}vy Prices and Convex Holding Costs\thanks{Address correspondence to Gabriele Stabile, Dipartimento di Metodi e Modelli per l'Economia, il Territorio e la Finanza, Sapienza-Universit\`{a} di Roma, via del Castro Laurenziano 9, 00161 Roma, Italy. Email: gabriele.stabile@uniroma1.it.}}\footnote{Financial support granted by Sapienza-Universit\`{a} di Roma, Research Project \textquotedblleft Modelli, Valutazione e Gestione del Rischio per i Mercati delle Commodities\textquotedblright \ is gratefully acknowledged by the first and third authors. Financial support by the German Research Foundation (DFG) via grant Ri 1128-4-1 is gratefully acknowledged by the second author.}}\author[*]{M.B.~Chiarolla}
\author[**]{G.~Ferrari}
\author[*]{G.~Stabile}

\affil[*]{Dipartimento di Metodi e Modelli per l'Economia, il Territorio e la Finanza, Universit\`{a} di Roma 'La Sapienza', Roma, Italy}
\affil[**]{Center for Mathematical Economics, Bielefeld University, Bielefeld, Germany}
\date{}

\maketitle

\vspace{0.5cm}

{\textbf{Abstract.}} In this paper we study a continuous time stochastic inventory model for a commodity traded in the spot market and whose supply purchase  is affected by price
and demand uncertainty. A firm aims at meeting a random demand of the commodity at a random time by maximizing total expected profits. We model the firm's optimal procurement problem as a singular stochastic control problem in which controls are nondecreasing processes and represent the cumulative investment made by the firm in the spot market (a so-called stochastic `monotone follower problem'). We assume a general exponential L\'{e}vy process for the commodity's spot price, rather than the commonly used geometric Brownian motion, and general convex holding costs.

We obtain necessary and sufficient first order conditions for optimality and we provide the optimal procurement policy in terms of a \textsl{base inventory} process; that is, a minimal time-dependent desirable inventory level that the firm's manager must reach at any time.
In particular, in the case of linear holding costs and exponentially distributed demand, we are also able to obtain the explicit analytic form of the optimal policy and a probabilistic representation of the optimal revenue. The paper is completed by some computer drawings of the optimal inventory when spot prices are given by a geometric Brownian motion and by an exponential jump-diffusion process. In the first case we also make a numerical comparison between the value function and the revenue associated to the classical static ``newsvendor" strategy.

\smallskip

{\textbf{Key words}}: continuous time inventory, L\'{e}vy price process, monotone follower problem, first order conditions for optimality, base inventory level.

\smallskip

{\textbf{MSC2010 subsject classification}}: 93E20, 60G51, 49J40, 91B26.

\smallskip

{\textbf{JEL classification}}: C02, C61, D92.

\section{Introduction}
\label{introduction}

This paper contributes to the literature related to inventory management policies in the presence of price and demand uncertainty. Usually, the two main procurement mechanisms are the spot market, characterized by a negligible lead time, and long-term contracts, that offer the opportunity to procure the good for future use and with no payment prior to the delivery. In recent years the role of the spot market in procurement decisions has become prominent. That is also due to the raising of online spot markets such as memory chips, chemical, energy etc.\ (see Seifert, Thonemann, \& Hausman,\ 2004). As a consequence the pertinent literature has experienced an increasing interest in mathematical models for the spot market and its role in procurement decisions (see Bencherouf, 2007; Guo, Kaminsky, Tomecek, \& Yuen,\ 2011; Sato \& Sawaki, 2010, among others). For example, in Xiao-Li (2009) and Xinga, Wangb, and Liu (2012) the optimal procurement policies are obtained for a business-to-business (B2B) spot market. The optimal replenishment policy for an inventory model that minimizes the total expected discounted costs over an infinite planning horizon is studied in Bencherouf (2007) and Sato and Sawaki (2010) in the setting of impulsive control.
In Bencherouf (2007) the demand is driven by a Brownian motion with drift, whereas in Sato and Sawaki (2010) the demand is assumed to be deterministic and the market price of the good follows a geometric Brownian motion.

A model falling in the class of singular stochastic control problems (that is, a problem in which the controls are the cumulative amounts of the purchased/sold commodity and they are possibly singular with respect to the Lebesgue measure as functions of time) may be found in Guo et al.\ (2011). There the Authors study the inventory problem of a firm facing a random demand at the end of a random time interval. The firm may buy or sell the good in the spot market at any instant prior to the demand time. Trading the commodity in the spot market provides the immediate delivery of the good at a price modeled by a geometric Brownian motion. The firm aims at maximizing total net expected profits under linear holding and shortage costs. Holding costs are due to inventory storage, whereas shortage costs are incurred in when the demand exceeds the available stock. In fact, in such case the firm has a loss of revenue from not meeting the demand, and a loss of future business due to reputation lowering. On the other hand, there might be an excess of inventory at terminal time. That may be salvaged at a price possibly lower than the purchasing one through, for example, a discounted sale. It is shown that the spot market may be used to hedge against both supply costs and demand price uncertainty, although in the classical literature the firm usually enters the spot market when the demand exceeds the inventory level.

In this paper we essentially adopt the setting of Guo et al.\ (2011) but we take exponential L\'{e}vy prices and convex holding costs, and we assume that the commodity deteriorates over time at an exponential rate. However, we do not allow selling and we do not have inventory level's constraints.
Our control problem is a so-called `monotone follower problem' since selling is not allowed and the control process represents the cumulative amount of commodity purchased by the firm.
We model the commodity spot price $P$ by an exponential L\'{e}vy process to take into account the fact that, empirically, the market is non Gaussian but it exhibits significant skewness and kurtosis. We thus drop the assumption of normally distributed increments, while keeping a convenient Markovian structure. Our setup also includes the case of jump processes (like the Poisson process) or that of jump-diffusion processes, and of course the Brownian case typically assumed in the literature. As in Guo et al.\ (2011), we take a random demand $D$ fully described by a general absolutely continuous distribution function $F_D$ with finite mean, a random demand time $\Theta$ exponentially distributed, and we assume $D$ and $\Theta$ both independent of the price process. Such independence allows to rewrite the firm's problem in terms of an equivalent concave, monotone follower problem with infinite time horizon (some classical references on monotone follower problem are Karatzas, 1981; Karatzas, 1983; Karatzas, \& Shreve, 1984; El Karoui \& Karatzas, 1991).

We show existence and uniqueness of the optimal inventory policy by exploiting the concavity of our problem and by a suitable version of Koml\'os' Theorem (cf. Kabanov, 1999). Hence we characterize it by a set of necessary and sufficient first order conditions similar to those in Bank and Riedel (2001), Bank (2005), Chiarolla, Ferrari, and Riedel \ (2013), Ferrari (2015), Riedel and Su (2011) and Steg (2012), among others. Such conditions may be thought of as an infinite-dimensional, stochastic generalization of the classical Kuhn-Tucker conditions. They represent a valid alternative to the Hamilton-Jacobi-Bellman (HJB) approach, especially in non Markovian settings.

We show that the optimal policy requires to keep the inventory above a \textsl{base inventory} level $\ell^*_t$, at each time $t$. The process $\ell^*$ represents the maximal inventory level for which it is not profitable to delay the commodity's purchase to any future time, and hence it is characterized in terms of a family of optimal stopping problems (see also Bank \& F\"ollmer, 2002, for a relation with Gittins' indeces in continuous time). At times when the firm's inventory level is strictly above $\ell^*_t$, it is optimal to wait as at those times the firm faces excess of inventory. On the other hand, when the inventory is below $\ell^*_t$, then the firm should instantaneously invest in order to reach the level $\ell^*_t$. Hence, as expected in monotone follower problems, the optimal inventory policy behaves as the solution of a Skorohod's reflection problem at the (random) moving boundary $\ell^*$. Such a structure of the optimal policy is very natural from the point of view of inventory theory (see, e.g., Porteus, 1990) and it is well known in other contexts as well. See for example models with durable goods (cf., e.g., Bank \& Riedel, 2003), irreversible investments (cf.\ Chiarolla \& Haussmann, 2009; Riedel \& Su, 2011) and consumption with memory (cf.\ Bank \& Riedel, 2001). We point out that there is a specific condition on the parameters of our model under which the optimal procurement policy consists in not investing at all, whatever random demand $D$ and convex holding costs function one picks (see Proposition \ref{noinvestprop} below).

An explicit form of the optimal investment policy is obtained in Section \ref{examples} in the particular case of linear holding costs and exponentially distributed demand. The \textsl{base inventory} level is determined there by solving a backward stochastic equation in the spirit of Bank and El Karoui (2004) and, as expected, it turns out to be price-dependent. Moreover in such setting we provide a probabilistic representation of the value function. In the case of spot prices given by a geometric Brownian motion or by an exponential jump-diffusion process we make computer drawings of the optimal inventory level. Finally, we numerically compare the revenue associated to a modified version of the classical ``newsvendor" model, in which the commodity is purchased only once and at the initial time, with the value function of our model in the case of geometric Brownian motion prices. The results suggest that our optimal inventory strategy, dynamically acting over the entire given time interval, provides a higher revenue than that corresponding to the optimal static newsvendor one. Moreover, an increase in price volatility makes the difference among the two revenues increase. Therefore, although firms may prefer long term contracts when the commodity price is highly fluctuating, our results suggest that in order to increase the revenue the commodity must be dynamically procured in the spot market.

The rest of the paper is organized as follows. In Section \ref{firmproblem} we set the firm's optimal procurement problem, and in Section \ref{equivalent} we derive an equivalent concave singular stochastic control problem. In Section \ref{Existence} we prove the existence of optimal procurement policies and in Section \ref{FOCandbasecapacity} we obtain a characterization of them. Finally, Section \ref{examples} contains some explicit examples and computer drawings.


\section{Problem Formulation}
\label{firmproblem}

Consider a firm choosing a dynamic procurement policy of a single commodity to meet a random demand $D$ at a prescribed future random time $\Theta$. Each unit of demand satisfied gives rise to a profit depending on the spot price at time $\Theta$, $P_{\Theta}$. At any time $t \in [0,\Theta)$ the firm can instantaneously increase its inventory but it cannot buy inventory at terminal time $\Theta$ to meet demand. In case the inventory at time $\Theta$ is lower than the demand, then the firm incurs in a shortage cost proportional to $P_{\Theta}$.
On the other hand, when the inventory at time $\Theta$ exceeds the demand, the excess of inventory is salvaged by selling it in the spot market at price $\alpha_s P_{\Theta}$, for some $0 < \alpha_s \leq 1$. The costs associated to a procurement policy are the ordering cost given by the purchase of the commodity in the spot market, and the holding cost of storaging the commodity up to time $\Theta$.

To introduce the model fix a complete probability space $(\Omega,\mathcal{G},\mathbb{Q})$ and let the demand time $\Theta$ be a positive $\mathcal{G}$-measurable random variable. Consider a spot market in which the commodity may be traded at any time $t\in[0,\Theta)$ at a price $P_t$. Here $\{P_t, t\geq 0\}$ is an exogenous positive stochastic process on $(\Omega,\mathcal{G},\mathbb{Q})$ and $\{\mathcal{F}_t, t \geq 0\}$ is the filtration generated by $P_t$. We assume for $\mathbb{F}$ the usual hypotheses of completeness and right-continuity.
\begin{definition}
The price process $P_t$ has the following exponential L\'{e}vy structure
\beq
\label{P}
P_t=e^{\delta t -\zeta X_t - \pi(-\zeta)t}, \quad t \geq 0,
\eeq
where $\delta, \zeta \in \mathbb{R}$,
\label{assX}
$X$ is a Markov process with $X_0=0$, right-continuous sample paths, stationary, independent increments, and finite Laplace exponent $\pi(\cdot)$ given by
\[\mathbb{E}[e^{\zeta X_t}] = e^{\pi(\zeta)t}\] for all $\zeta \in \mathbb{R}$ and $t>0$.
\end{definition}
The constant $\delta$ may be seen as an interest rate, whereas $\zeta$ as a market price of risk.
\begin{remark}
\label{remarkP0}\hspace{10cm}
\begin{enumerate}
	\item There is no loss of generality in considering structure \eqref{P} since commonly used price dynamics may be cast in form \eqref{P} by adjusting $\delta$, through the Laplace transform of $X$. Such structure \eqref{P} usually arises in a financial market affected by a Markov uncertainty process $X$ (see, e.g., Duffie, 1992, Chapter 6, for a discussion in a Brownian setting).
	\item The uncertainty process $X$ is much more general than the Brownian one, commonly assumed in the literature. Indeed we drop the assumption of normally distributed increments, while keeping a convenient Markovian structure. Our setup also covers the case of jump processes, like the Poisson process, or that of jump-diffusion processes (see Section \ref{examples}), as well as the deterministic case obtained by setting $\zeta=0$.
\end{enumerate}
\end{remark}
\noindent For the positive, $\mathcal{G}$-measurable random variables $\Theta$ and $D$ we make the following (see also Guo et al., 2011, Section 3.1)
\begin{Assumptions}
\label{TD} \hspace{10cm}
\begin{enumerate}
	\item $\Theta$ is independent of the filtration $\{\mathcal{F}_t\}$ and $D$.
	\item $\Theta$ is exponentially distributed with rate $\lambda>0$.
	\item $D$ is independent of $\Theta$ and $\{\mathcal{F}_t\}$, and its distribution $F_D(y)=\mathbb{Q}(D \leq y)$ is absolutely continuous with density $f_D(y) = F'_D(y)$. Moreover
	$\mathbb{E}[D]< \infty$.
\end{enumerate}
\end{Assumptions}
\noindent In particular Assumption \ref{TD}, point 2 implies that the average demand time is $1/\lambda$.

We denote by $r>0$ the firm's manager discount factor and we assume for the firm's inventory $Y^{y,\nu}$ the following dynamics
\beq
\label{inventorydyn}
\left\{
\begin{array}{ll}
dY^{y,\nu}_t = -  \varepsilon Y^{y,\nu}_t dt + d\nu_t, \quad t>0,
\\
Y^{y,\nu}_0=y,
\end{array}
\right.
\eeq
where $\varepsilon\geq0$ is the commodity deterioration rate, $y\geq0$ is the initial inventory amount, and $\nu_t$ is the cumulative amount of commodity purchased up to time $t$. In fact, we take $\nu \in \mathcal{A}$ with
\begin{equation*}
\mathcal{A}:=\Big{\{}\nu:\Omega \times \mathbb{R}_{+} \rightarrow  \mathbb{R}_{+}\,\,\mbox{nondecreasing,\,\,left-continuous,}\,\,\{\mathcal{F}_t\}\mbox{-adapted} \ \mbox{s.t.} \ \nu_0=0\,\,\mathbb{Q}\mbox{-a.s.} \Big{\}}.
\end{equation*}
Then the explicit solution of \eqref{inventorydyn} is
\beq
\label{inventorydyn2}
Y^{y,\nu}_t=e^{-\varepsilon t}[ y + \overline{\nu}_t], \quad t \geq 0,
\eeq
with
\beq
\label{overlinenu}
\overline{\nu}_t:=\int_{[0,t)} e^{\varepsilon s}d\nu_s, \quad t \geq 0.
\eeq
The mapping $\mathcal{A} \ni \nu \mapsto \overline{\nu} \in \mathcal{A}$ defined by \eqref{overlinenu} is one-to-one and onto.  Left-continuity of $\nu$ captures the restriction that no commodity may be purchased at time $\Theta$. The fact that $\nu$ is $\{\mathcal{F}_t\}$-adapted guarantees that any investment decisions is taken on the basis of all the information on the price process up to time $t$.

The net gain function at the demand time $\Theta$ is given by $P_{\Theta}G(Y^{y,\nu}_{\Theta},D)$, where the `revenue multiplier' $G$  is defined by
\beq
\label{gainT}
G(y,D):=\alpha \min\{y,D\}- \alpha_p [D-y]^+ + \alpha_s[y-D]^+,
\eeq
for any inventory level $y \geq 0$.
Notice that $[D-Y^{y,\nu}_{\Theta}]^+$ represents the amount of unsatisfied demand at time $\Theta$ and $\alpha_p \geq 0$ is a penalty factor. Similarly $[Y^{y,\nu}_{\Theta}-D]^+$ is the excess amount of inventory at time $\Theta$ cleared in the spot market at a price possibly lower than $P_{\Theta}$, according to the factor $\alpha_s \in (0,1]$. Finally, $\alpha \geq 1$ represents a premium factor for the amount of demand satisfied. Notice that $\alpha + \alpha_p-\alpha_s \geq 0$.

The cost of increasing the inventory at time $t \in [0,\Theta)$ by a quantity $d\nu_t$ is $P_t \ d\nu_t$, whereas the cost of holding the inventory for an infinitesimal time interval $(t,t+dt) \subset [0,\Theta)$ is $c(Y^{y,\nu}_t)dt$. Hence, for any admissible procurement policy $\nu$, the total expected discounted return to the firm is
\begin{eqnarray*}
\lefteqn{\mathcal{J}(y,\nu) = \mbox{net\, expected\, discounted\, gain\, at\, demand\, time\, $\Theta$} } \\
&\hspace{1.5cm}-\,\mbox{total\, expected\, discounted\, holding\, costs}\,-\,\mbox{total\, expected\, ordering\, costs}; \nonumber
\end{eqnarray*}
that is,
\beq
\label{return}
\mathcal{J}(y,\nu) = \mathbb{E}\bigg[e^{-r \Theta}P_{\Theta} G(Y^{y,\nu}_{\Theta},D)-\int_{0}^{\Theta}e^{-r t}c(Y^{y,\nu}_t) dt -\int_{[0,\Theta)}e^{-r t}P_t d\nu_t\bigg].
\eeq
\noindent The following standing assumption shall hold throughout the paper
\begin{Assumptions}
\label{Assumptioncost}
The holding cost function $c : \mathbb{R}_{+} \rightarrow  \mathbb{R}_{+}$ is convex, strictly increasing, continuously differentiable with $c(0)=0$ and
\beq
\int_0^{\infty} e^{-rt}c'(ye^{-\varepsilon t})dt < \infty, \quad \text{for any } y\geq 0.
\label{assumption2.3}
\eeq
\end{Assumptions}

\begin{remark}
\label{Remarkasscost}
\hspace{10cm}
\begin{enumerate}
\item Notice that the requirement $c(0)=0$ is without loss of generality, since if $c(0)=K>0$ then one can always set $\hat{c}(y):=c(y)-c(0)$ and write $c(y)=\hat{c}(y)+K$, so that
the firms's optimization problem (cf.\ \eqref{valuefunction2} below) remains unchanged up to an additive constant.
	\item Convexity of $c(\cdot)$ together with $c(0)=0$ imply $c(y) \leq yc'(y)$ for any $y \geq 0$, and therefore
\beq
\label{integrabilityc}
\displaystyle \int_0^{\infty} e^{-rt}c(ye^{-\varepsilon t})dt \leq y\int_0^{\infty} e^{-(r+\varepsilon)t}c'(ye^{-\varepsilon t})dt < \infty,
\eeq
where finiteness of the last term in \eqref{integrabilityc} is due to \eqref{assumption2.3}.
   \item Natural examples of cost functions satisfying Assumption \ref{Assumptioncost} are $c(x)=\beta_1x$ or $c(x)=\beta_2 x^2$, for $\beta_1, \beta_2>0$.
   \item One could also allow proportional trading costs in the spot market, which amounts to a cost $K>0$ for each unit of commodity purchased, under suitable further conditions.
\end{enumerate}
\end{remark}
\noindent The firm aims at picking a procurement policy in order to maximize the total expected discounted return \eqref{return}.
For the well-posedness of the problem some integrability condition is needed.
In fact, we define the set $\mathcal{S}$ of admissible procurement policies
\begin{equation}
\label{admissibleset}
\mathcal{S}  := \Big{\{}\overline{\nu}\in \mathcal{A} \, \ : \ \,\mathbb{E}\Big[\int_{0}^{\infty}e^{- \beta t} P_t \ \overline{\nu}_t dt \Big]< \infty\Big{\}}
\end{equation}
with
\beq
\label{beta}
\beta:=r + \varepsilon + \lambda.
\eeq
Now in terms of $\overline{\nu}$ (cf.\ \eqref{overlinenu} and \eqref{inventorydyn2}) we may rewrite \eqref{return} as
\begin{eqnarray}
\label{return2}
\mathcal{J}(y,\nu) =\mathcal{\widetilde{J}}(y,\overline{\nu}) &\hspace{-0.25cm} =\hspace{-0.25cm}&  \mathbb{E}\bigg[e^{-r \Theta}P_{\Theta} G(e^{-\varepsilon \Theta}(y + \overline{\nu}_\Theta),D) \nonumber \\
& & \hspace{1cm} - \int_{[0,\Theta)}e^{-(r+\varepsilon) t}P_t d\overline{\nu}_t - \int_{0}^{\Theta}e^{-r t}c(e^{-\varepsilon t}(y + \overline{\nu}_t)) dt \bigg],
\end{eqnarray}
and by using the independence of $D$, $\Theta$ and $P$ (cf.\ Assumptions \ref{TD}), we have
\begin{eqnarray}
\label{return3}
\mathcal{\widetilde{J}}(y,\overline{\nu})  &\hspace{-0.25cm} =\hspace{-0.25cm} &  \mathbb{E}\bigg[\int_0^{\infty} \lambda e^{-(r+\lambda)t} P_t \bigg(\int_{0}^{\infty} G(e^{-\varepsilon t}(y + \overline{\nu}_t),z)f_D(z) dz\bigg)dt \nonumber \\
& & \hspace{0.6cm} - \int_{[0,\infty)}e^{-(r+\lambda + \varepsilon) t}P_t d\overline{\nu}_t - \int_{0}^{\infty}e^{-(r+\lambda) t}c(e^{-\varepsilon t}(y + \overline{\nu}_t)) dt\bigg].
\end{eqnarray}
Hence the firm's optimization problem is

\beq
\label{valuefunction2}
V(y)=\sup_{\overline{\nu} \in \mathcal{S}}\mathcal{\widetilde{J}}(y,\overline{\nu}), \qquad y\geq 0.
\eeq

From a mathematical point of view, problem \eqref{valuefunction2} falls into the class of singular stochastic control problems of monotone follower type (see the Introduction for some classical references), which allows controls possibly singular (as functions of time) with respect to the Lebesgue measure. If the performance criterion is concave (or convex) it is well known that the optimal control policy consists in keeping the state process at or above a certain threshold. In mathematical terms, the optimal control is the solution of a Skorohod reflection problem at a moving boundary (see, e.g., El Karoui \& Karatzas, 1991; Karatzas, 1981; Karatzas, 1983; Karatzas \& Shreve, 1984). Problem \eqref{valuefunction2} may be rewritten in terms of a new functional for which is easy to check concavity. This is accomplished in the following section (see Proposition \ref{properconcave} below).
\section{An Equivalent Concave Optimization Problem}
\label{equivalent}
Since the `revenue multiplier' $G$ has a singular point (i.e.\ a point in which it is not differentiable), we prefer to switch to a more regular function. In fact,
we rewrite the functional in \eqref{return3} by borrowing some ideas and arguments from Guo et al.\ (2011).\\
\indent Define the random field $\Gamma: \Omega \times \mathbb{R}_+ \times \mathbb{R}_+ \rightarrow \mathbb{R}$
\beq
\label{Pi}
\Gamma(\omega,t,y):=e^{-(r+\lambda)t}\Big[\lambda P_t(\omega) H(e^{-\varepsilon t} y)- c(e^{-\varepsilon t} y)\Big],
\eeq
with the function $H: \mathbb{R}_+ \rightarrow \mathbb{R}$ given by
\beq
\label{H}
H(y):=\alpha_s y - (\alpha_p-\alpha_s)\int_y^{\infty}zf_D(z)dz + (\alpha+\alpha_p-\alpha_s)y(1-F_D(y))+\alpha\int_0^y zf_D(z)dz,
\eeq
or equivalently
\beq
\label{H2}
H(y)=\alpha_s y + \alpha\mathbb{E}[D]- (\alpha+\alpha_p-\alpha_s)\int_y^{\infty}(z-y)f_D(z)dz.
\eeq
The following lemma shows that $\Gamma$ is continuously differentiable and concave in $y$.
\begin{lemma}
\label{propertiesPi}
The following properties hold,
\begin{description}
    \item[(i)] $H$ is continuously differentiable, strictly increasing, and concave in $\mathbb{R}_+$ with
\beq
\label{boundsH}
H(y) \leq \alpha_s y + \alpha\mathbb{E}[D], \qquad \qquad H(y) \geq -(\alpha_p-\alpha_s)\mathbb{E}[D];
\eeq
	\item[(ii)] $y \mapsto \Gamma(\omega,t,y)$ is continuously differentiable and concave for any $(\omega,t) \in \Omega \times \mathbb{R}_+$;
	\item[(iii)] $(\omega,t) \mapsto \Gamma(\omega,t,y)$ is $\{\mathcal{F}_t\}$-progressively measurable for any $y \geq 0$.
\end{description}
\end{lemma}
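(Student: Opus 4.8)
The plan is to prove the three items in the order (i), (ii), (iii): item (ii) reduces to (i) by composition with an affine map, and (iii) is a measurability bookkeeping about the price process, so essentially all the work is in (i).

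For (i) I would work from the compact representation \eqref{H2} rather than \eqref{H}, first recording that the two coincide via the elementary identities $\int_0^y z f_D(z)\,dz = \mathbb{E}[D] - \int_y^\infty z f_D(z)\,dz$ and $y(1-F_D(y)) = y\int_y^\infty f_D(z)\,dz$. Write $g(y) := \int_y^\infty (z-y)f_D(z)\,dz = \mathbb{E}\big[(D-y)^+\big]$, which satisfies $0 \le g(y) \le \int_y^\infty z f_D(z)\,dz \le \mathbb{E}[D] < \infty$ by Assumption \ref{TD}(3); since $D$ admits a density the event $\{D = y\}$ is $\mathbb{Q}$-null for every $y$, so dominated convergence (the difference quotients of $y \mapsto (z-y)^+$ are bounded by $1$) gives $g'(y) = -(1-F_D(y))$ for every $y \ge 0$, a continuous function. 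Hence from \eqref{H2}, $H'(y) = \alpha_s + (\alpha+\alpha_p-\alpha_s)\big(1-F_D(y)\big)$; using $\alpha+\alpha_p-\alpha_s \ge 0$, $1-F_D \ge 0$ and $\alpha_s > 0$ this is $\ge \alpha_s > 0$, so $H \in C^1(\mathbb{R}_+)$ and is strictly increasing, while monotonicity of $F_D$ makes $H'$ nonincreasing and hence $H$ concave (equivalently $H'' = -(\alpha+\alpha_p-\alpha_s)f_D \le 0$ a.e.). The bounds \eqref{boundsH} then drop straight out of \eqref{H2}: the subtracted term $(\alpha+\alpha_p-\alpha_s)g(y) \ge 0$ gives $H(y) \le \alpha_s y + \alpha\mathbb{E}[D]$, while $g(y) \le \mathbb{E}[D]$ together with $\alpha_s y \ge 0$ gives $H(y) \ge \alpha_s y - (\alpha_p-\alpha_s)\mathbb{E}[D] \ge -(\alpha_p-\alpha_s)\mathbb{E}[D]$.

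For (ii) I would fix $(\omega,t)$ and observe that $y \mapsto e^{-\varepsilon t} y$ is affine with strictly positive slope, so $y \mapsto H(e^{-\varepsilon t} y)$ inherits $C^1$-regularity and concavity from (i) and $y \mapsto c(e^{-\varepsilon t} y)$ inherits $C^1$-regularity and convexity from Assumption \ref{Assumptioncost}; since $\lambda P_t(\omega) > 0$ and $e^{-(r+\lambda)t} > 0$, the map $y \mapsto \Gamma(\omega,t,y)$ in \eqref{Pi} is a positive multiple of the concave function $\lambda P_t(\omega) H(e^{-\varepsilon t}\cdot)$ plus a positive multiple of the concave function $-c(e^{-\varepsilon t}\cdot)$, hence concave, and $C^1$ as a finite sum of $C^1$ functions. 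For (iii), $P$ is adapted to $\{\mathcal{F}_t\}$ (its augmented natural filtration) and has right-continuous sample paths inherited from $X$, hence is $\{\mathcal{F}_t\}$-progressively measurable; for fixed $y$ the deterministic factors $t \mapsto e^{-(r+\lambda)t}\lambda H(e^{-\varepsilon t} y)$ and $t \mapsto e^{-(r+\lambda)t} c(e^{-\varepsilon t} y)$ are continuous, so the random field $(\omega,t) \mapsto \Gamma(\omega,t,y)$, obtained from $P$ by multiplication and addition with these functions, is progressively measurable.

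The only genuinely delicate point is the differentiation of $g$ in (i): it must be justified using only $\mathbb{E}[D] < \infty$ and absolute continuity of $F_D$, with no pointwise regularity of $f_D$ available, and one should note that the $C^1$ conclusion for $H$ needs only continuity of $F_D$ (not of $f_D$), since $H'$ involves $f_D$ only through $F_D$. Everything else follows directly from the convexity and monotonicity hypotheses already in force.
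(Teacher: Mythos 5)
Your proposal is correct and follows essentially the same route as the paper: compute $H'(y)=\alpha_s+(\alpha+\alpha_p-\alpha_s)(1-F_D(y))$, deduce strict monotonicity and concavity, read the bounds \eqref{boundsH} off \eqref{H2} (the paper gets the lower bound from $H$ increasing and $H(0)=-(\alpha_p-\alpha_s)\mathbb{E}[D]$, which is equivalent to your $g(y)\le\mathbb{E}[D]$ argument), and then obtain (ii) by concavity of $H$ and $-c$ and (iii) from adaptedness and right-continuity of $P$. Your dominated-convergence justification of $g'(y)=-(1-F_D(y))$ is a slightly more careful treatment of the differentiation under the integral than the paper's formal computation, but it is not a different approach.
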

\begin{proof}
From \eqref{H} one has that
\beq
\label{derivativeH}
H'(y) = \alpha_s + (\alpha+\alpha_p-\alpha_s)(1-F_D(y)),
\eeq
which shows that $y \mapsto H(y)$ is continuously differentiable and strictly increasing as $\alpha+\alpha_p-\alpha_s\geq 0$.
Moreover
\beq
H''(y) = -(\alpha+\alpha_p-\alpha_s)f_D(y),
\eeq
which implies concavity of $H(\cdot)$.
The first of \eqref{boundsH} follows from \eqref{H2} recalling that $\alpha+\alpha_p-\alpha_s\geq 0$; whereas the second one follows from the fact that $H(\cdot)$ is increasing and $H(0)=-(\alpha_p-\alpha_s)\mathbb{E}[D]$.
As for ii) clearly $y \mapsto \Gamma(\omega,t,y)$ is continuously differentiable and concave for any $(\omega,t) \in \Omega \times \mathbb{R}_+$ since so are $H(\cdot)$ and $-c(\cdot)$ (cf.\ Assumption \ref{Assumptioncost}). Finally, to show iii) it suffices to notice that progressive measurability of $(\omega,t) \mapsto \Gamma(\omega,t,y)$, for any $y \geq 0$, is implied by the fact that $P$ is $\{\mathcal{F}_t\}$-progressively measurable being $\{\mathcal{F}_t\}$-adapted with right-continuous paths (cf.\ Definition \ref{assX}).
\end{proof}

As in Guo et al.\ (2011), Theorem 4, we obtain the decomposition of the optimal total expected discounted return $V(y)$ in terms of the value function of a new optimization problem and the expected value of the demand $\mathbb{E}[D]$. To obtain such decomposition we need the following
\begin{Assumptions}
\label{assr0}
$r + \lambda- \delta > 0.$
\end{Assumptions}
\noindent Notice that Assumption \ref{assr0} is equivalent to require that $\mathbb{E}[e^{-r \Theta}P_\Theta]<\infty$, i.e.\ the expected discounted price of a unit of commodity at the demand time is finite.
\begin{proposition}
Under Assumption \ref{assr0}
the optimal total expected return is
\beq
\label{equivalentvaluefunction0}
V(y) = W(y) - \frac{\lambda\alpha_s}{r+\lambda - \delta}\mathbb{E}[D],
\eeq
where
\beq
\label{W}
W(y):=\sup_{\overline{\nu} \in \mathcal{S}}\hat{\mathcal{J}}(y,\overline{\nu})
\eeq
and
\beq
\label{jhat}
\hat{\mathcal{J}}(y,\overline{\nu}) := \mathbb{E}\bigg[\int_{0}^{\infty} \Gamma(t, y + \overline{\nu}_t) dt
-\int_{[0,\infty)}e^{-\beta t}P_t d\overline{\nu}_t\bigg].
\eeq

\end{proposition}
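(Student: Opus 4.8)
The plan is to start from the explicit representation \eqref{return3} of $\widetilde{\mathcal{J}}(y,\overline{\nu})$ and to reduce the demand-averaged gain term to the function $H$ plus an explicit additive constant. First I would compute, for a fixed inventory level $u\geq 0$, the inner integral $\int_0^\infty G(u,z)f_D(z)\,dz=\mathbb{E}[G(u,D)]$. Writing $\min\{u,D\}=D-[D-u]^+$ and $[u-D]^+=u-D+[D-u]^+$ in \eqref{gainT}, and using $\mathbb{E}[D]<\infty$ (Assumption \ref{TD}, point 3), one obtains
\beq
\mathbb{E}[G(u,D)]=(\alpha-\alpha_s)\mathbb{E}[D]+\alpha_s u-(\alpha+\alpha_p-\alpha_s)\int_u^\infty (z-u)f_D(z)\,dz ,
\eeq
which, compared with \eqref{H2}, is exactly $H(u)-\alpha_s\mathbb{E}[D]$. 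Inserting this with $u=e^{-\varepsilon t}(y+\overline{\nu}_t)$ turns the first term of \eqref{return3} into $\int_0^\infty e^{-(r+\lambda)t}\lambda P_t H(e^{-\varepsilon t}(y+\overline{\nu}_t))\,dt$ minus $\lambda\alpha_s\mathbb{E}[D]\int_0^\infty e^{-(r+\lambda)t}P_t\,dt$. Since $\beta=r+\varepsilon+\lambda$, the remaining two terms of \eqref{return3} are precisely the ordering term of \eqref{jhat} and the $-c$ part of $\Gamma$ in \eqref{Pi}, so this manipulation yields, for every $\overline{\nu}\in\mathcal{S}$,
\beq
\widetilde{\mathcal{J}}(y,\overline{\nu})=\hat{\mathcal{J}}(y,\overline{\nu})-\lambda\alpha_s\mathbb{E}[D]\,\mathbb{E}\Big[\int_0^\infty e^{-(r+\lambda)t}P_t\,dt\Big].
\eeq

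Next I would evaluate the remaining expectation. From the L\'evy structure \eqref{P} and the definition of the Laplace exponent in Definition \ref{assX}, $\mathbb{E}[P_t]=e^{(\delta-\pi(-\zeta))t}\mathbb{E}[e^{-\zeta X_t}]=e^{\delta t}$, so by Tonelli's theorem (the integrand $e^{-(r+\lambda)t}P_t$ is nonnegative)
\beq
\mathbb{E}\Big[\int_0^\infty e^{-(r+\lambda)t}P_t\,dt\Big]=\int_0^\infty e^{-(r+\lambda-\delta)t}\,dt=\frac{1}{r+\lambda-\delta},
\eeq
the last step being legitimate and finite exactly because of Assumption \ref{assr0} (equivalently, because $\mathbb{E}[e^{-r\Theta}P_\Theta]=\lambda/(r+\lambda-\delta)<\infty$). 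Substituting gives $\widetilde{\mathcal{J}}(y,\overline{\nu})=\hat{\mathcal{J}}(y,\overline{\nu})-\frac{\lambda\alpha_s}{r+\lambda-\delta}\mathbb{E}[D]$ for all $\overline{\nu}\in\mathcal{S}$, and taking the supremum over $\mathcal{S}$ on both sides produces \eqref{equivalentvaluefunction0} with $W(y)=\sup_{\overline{\nu}\in\mathcal{S}}\hat{\mathcal{J}}(y,\overline{\nu})$, as claimed.

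The one point that needs genuine care — and the only real obstacle — is making sure that these manipulations are carried out with well-defined quantities, i.e. that $\widetilde{\mathcal{J}}(y,\cdot)$ and $\hat{\mathcal{J}}(y,\cdot)$ take values in $[-\infty,+\infty)$ on $\mathcal{S}$, so that no $\infty-\infty$ arises and the finite constant may be split off the expectation. For this I would use the upper bound $H(u)\leq\alpha_s u+\alpha\mathbb{E}[D]$ from Lemma \ref{propertiesPi}(i): since $-c\le 0$ and the ordering term enters with a minus sign and is nonnegative ($P_t>0$, $d\overline{\nu}_t\ge 0$), the positive parts of $\Gamma(t,y+\overline{\nu}_t)$ and of the gain integrand of \eqref{return3} are both dominated by $\lambda\alpha_s e^{-\beta t}P_t(y+\overline{\nu}_t)+\lambda\alpha\mathbb{E}[D]e^{-(r+\lambda)t}P_t$. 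Its expectation is finite because $\overline{\nu}\in\mathcal{S}$ controls $\mathbb{E}\int_0^\infty e^{-\beta t}P_t\overline{\nu}_t\,dt$, because $\mathbb{E}\int_0^\infty e^{-\beta t}P_t\,dt=1/(\beta-\delta)<\infty$ (as $\varepsilon\ge 0$ and $r+\lambda-\delta>0$), and because $\mathbb{E}\int_0^\infty e^{-(r+\lambda)t}P_t\,dt<\infty$ by Assumption \ref{assr0}. Hence $\mathbb{E}[(\cdot)^+]<\infty$ for both functionals, the cost/ordering terms can only drive them toward $-\infty$, and the splitting of the expectation is justified. The rest is the bookkeeping indicated above, along the lines of Guo et al.\ (2011), Theorem 4.
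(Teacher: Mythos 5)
Your proposal is correct and follows essentially the same route as the paper: you reduce the demand-averaged gain $\int_0^\infty G(u,z)f_D(z)\,dz$ to $H(u)-\alpha_s\mathbb{E}[D]$ and evaluate $\mathbb{E}\big[\int_0^\infty e^{-(r+\lambda)t}P_t\,dt\big]=1/(r+\lambda-\delta)$ under Assumption \ref{assr0}, which is exactly the identity the paper invokes before concluding via \eqref{Pi}. The only difference is that you spell out the algebra and the integrability check ensuring no $\infty-\infty$ arises, details the paper's proof leaves implicit.
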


\begin{proof}
Since
\begin{multline*}
\mathbb{E}\bigg[\int_0^{\infty} \lambda e^{-(r+\lambda)t} P_t \bigg(\int_{0}^{\infty} G(e^{-\varepsilon t}(y + \overline{\nu}_t),z)f_D(z) dz\bigg)dt\bigg] \\
\hspace{-18cm} =\mathbb{E}\bigg[\int_0^{\infty} e^{-(r+\lambda)t}\lambda P_t H(e^{-\varepsilon t}(y + \overline{\nu}_t))dt\bigg]-\frac{\lambda\alpha_s}{r+\lambda-\delta}\mathbb{E}[D]
\end{multline*}
by Assumption \ref{assr0}, then the proof follows from \eqref{Pi}.
\end{proof}
In order to show that the new value function $W$ of \eqref{W} is concave and proper we need a further assumption and some preliminary lemmas.
\begin{Assumptions}
\label{assr}
$\beta-\delta-\lambda\alpha_s > 0.$
\end{Assumptions}
\noindent Such assumption may be rewritten as
\[
\mathbb{E}\Big[ e^{-r \Theta} e^{-\varepsilon \Theta} \alpha_s P_{\Theta} \Big]=\frac{\lambda \alpha_s }{r+\varepsilon-\delta+\lambda}=\frac{\lambda \alpha_s }{\beta-\delta}<1
\]
and its economic interpretation is the following. At time $\Theta$ one unit of excess of inventory purchased at time $0$ amounts to $e^{-\varepsilon \Theta}$ (due to deterioration) and such amount is sold at the clearing price $\alpha_s P_{\Theta}$ (due to penalization), hence its expected discounted value at time $0$ is $\mathbb{E}\Big[ e^{-r \Theta} e^{-\varepsilon \Theta} \alpha_s P_{\Theta} \Big]$
and we require that such value is less than $1$, i.e. less than the unitary price at time $0$.\\
\indent Recall that $\beta=r + \varepsilon + \lambda$ (cf. \eqref{beta}) and notice that $\beta>\delta$.

\begin{lemma}
\label{lemma1}
Under Assumption \ref{assr0} the discounted price process $e^{-\beta t}P_t$ is a positive $\{\mathcal{F}_t\}$-supermartingale with right-continuous sample paths, such that
\[
\lim_{t \rightarrow \infty}e^{-\beta t}P_t = 0 \ \ \ \mathbb{Q}-a.s.
\]
\end{lemma}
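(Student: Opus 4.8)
The plan is to exploit the exponential structure \eqref{P} to factor $e^{-\beta t}P_t$ into a deterministic, decreasing, strictly positive factor times a genuine positive martingale, and then read off the supermartingale property and the limit from this decomposition. Concretely, I would set $M_t := e^{-\zeta X_t - \pi(-\zeta)t}$, so that $P_t = e^{\delta t}M_t$ and hence $e^{-\beta t}P_t = e^{(\delta-\beta)t}M_t$.

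First I would verify that $M$ is a positive $\{\mathcal{F}_t\}$-martingale with $\mathbb{E}[M_t]=1$ for every $t\ge 0$. Positivity and right-continuity of the paths are inherited directly from the corresponding properties of $X$ (cf.\ Definition \ref{assX}). For the martingale property, fix $0\le s<t$; using the independence and stationarity of the increments of $X$ and the finiteness of the Laplace exponent on all of $\mathbb{R}$, one has $\mathbb{E}[e^{-\zeta(X_t-X_s)}\mid\mathcal{F}_s]=\mathbb{E}[e^{-\zeta X_{t-s}}]=e^{\pi(-\zeta)(t-s)}$, whence $\mathbb{E}[M_t\mid\mathcal{F}_s]=M_s\,e^{-\pi(-\zeta)(t-s)}e^{\pi(-\zeta)(t-s)}=M_s$; also $\mathbb{E}[M_t]=\mathbb{E}[M_0]=1$ since $X_0=0$.

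Next I would observe that, because $\beta=r+\varepsilon+\lambda$ with $\varepsilon\ge 0$, Assumption \ref{assr0} gives $\delta-\beta=(\delta-r-\lambda)-\varepsilon\le\delta-r-\lambda<0$, so $t\mapsto e^{(\delta-\beta)t}$ is strictly positive and strictly decreasing. Consequently $e^{-\beta t}P_t$ is integrable for each $t$, with $\mathbb{E}[e^{-\beta t}P_t]=e^{(\delta-\beta)t}<\infty$, it is positive and right-continuous, and for $0\le s<t$
\[ \mathbb{E}[e^{-\beta t}P_t\mid\mathcal{F}_s]=e^{(\delta-\beta)t}\mathbb{E}[M_t\mid\mathcal{F}_s]=e^{(\delta-\beta)t}M_s\le e^{(\delta-\beta)s}M_s=e^{-\beta s}P_s, \]
so $e^{-\beta t}P_t$ is a positive $\{\mathcal{F}_t\}$-supermartingale. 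Finally, for the almost sure limit I would invoke the supermartingale convergence theorem: being a nonnegative supermartingale, $e^{-\beta t}P_t$ converges $\mathbb{Q}$-a.s.\ as $t\to\infty$ to some finite random variable $L\ge 0$; since $\mathbb{E}[e^{-\beta t}P_t]=e^{(\delta-\beta)t}\to 0$, Fatou's lemma yields $\mathbb{E}[L]\le\liminf_{t\to\infty}\mathbb{E}[e^{-\beta t}P_t]=0$, and as $L\ge 0$ this forces $L=0$ $\mathbb{Q}$-a.s.

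I do not expect any serious obstacle in this proof. The only points requiring a little care are: (i) checking that the sign of $\delta-\beta$ is strictly negative, which is exactly where Assumption \ref{assr0} enters (jointly with $\varepsilon\ge 0$), so that the deterministic factor really is decreasing; and (ii) that the hypothesis of a finite Laplace exponent $\pi(\cdot)$ on all of $\mathbb{R}$ legitimises the conditional-expectation computation that makes $M$ a true martingale rather than merely a local one.
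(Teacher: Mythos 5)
Your proof is correct and follows essentially the same route as the paper: the factorization $e^{-\beta t}P_t=e^{(\delta-\beta)t}M_t$ with $M_t:=e^{-\zeta X_t-\pi(-\zeta)t}$ a martingale is just a repackaging of the paper's conditional-expectation identity $\mathbb{E}[e^{-\beta t}P_t\,|\,\mathcal{F}_s]=e^{-(\beta-\delta)(t-s)}e^{-\beta s}P_s$, obtained from the stationary independent increments and the Laplace exponent, with $\beta>\delta$ coming from Assumption \ref{assr0} and $\varepsilon\geq 0$. The concluding step, nonnegative supermartingale convergence combined with Fatou's lemma and $\mathbb{E}[e^{-\beta t}P_t]\to 0$, is exactly the paper's argument, so no changes are needed.
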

\begin{proof}
The properties of the Markov process $X$ (cf.\ Definition \ref{assX}) together with $\beta>\delta$ imply the right-continuous supermartingale property of $e^{-\beta t}P_t$ since
\beq
\label{Psupermartingale}
\mathbb{E}[e^{-\beta t}P_t|\mathcal{F}_s] = e^{-(\beta-\delta)(t-s)}e^{-\beta s }P_s, \quad t \geq s \geq 0.
\eeq
Then Fatou's Lemma, \eqref{Psupermartingale} and $\beta>\delta$ imply
$$\displaystyle 0  \leq  \mathbb{E}[\lim_{t \rightarrow \infty}e^{-\beta t}P_t]  \leq  \displaystyle \liminf_{t \rightarrow \infty}\mathbb{E}[e^{-\beta t}P_t]=\lim_{t \rightarrow \infty}\mathbb{E}[e^{-\beta t}P_t]=0.$$
This together with
$\lim_{t \rightarrow \infty}e^{-\beta t}P_t\geq 0 \ \ \mathbb{Q}$-a.s. (cf.\ Karatzas \& Shreve, 1988, Ch.\ 1, Problem 3.16)
give $\lim_{t \rightarrow \infty}e^{-\beta t}P_t = 0 \ \ \mathbb{Q}$-a.s.
\end{proof}

From now on we will denote by $\tau$ any $\{\mathcal{F}_t\}$-stopping time with values in $[0,\infty]$ and, in light of Lemma \ref{lemma1}, we will adopt the following
\begin{definition}
$e^{-\beta \tau}P_{\tau}:= \lim_{t \rightarrow \infty}e^{-\beta t}P_t =0$ \,\,on \,\, $\{\tau=\infty\}.$
\end{definition}
\begin{lemma}

\label{lemma2}
Under Assumption \ref{assr0} one has
\begin{eqnarray}
 \label{Psupermartingale2}
 & & \mathbb{E}\bigg[\int_0^{\infty}e^{- \beta t} P_t dt\bigg] = \frac{1}{\beta -\delta}
 \end{eqnarray}
and, for any stopping time $\tau \in [0,\infty]$,
\begin{eqnarray}
 \label{Psupermartingale3}
 & & \mathbb{E}\bigg[\int_{\tau}^{\infty}e^{- \beta t} P_t dt\Big| \mathcal{F}_{\tau}\bigg] = e^{- \beta \tau} \frac{P_{\tau}}{\beta -\delta}.
 \end{eqnarray}
\end{lemma}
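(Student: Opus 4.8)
The plan is to prove the conditional identity \eqref{Psupermartingale3} first and then to read off \eqref{Psupermartingale2} as its special case $\tau\equiv 0$ (using $P_0=1$, which is immediate from \eqref{P} and $X_0=0$). The first step is to promote the deterministic-time identity \eqref{Psupermartingale} to arbitrary stopping times. Setting $N_t:=e^{-\delta t}P_t=e^{-\zeta X_t-\pi(-\zeta)t}$ and multiplying \eqref{Psupermartingale} by $e^{(\beta-\delta)t}$ shows that $\mathbb{E}[N_t\mid\mathcal{F}_s]=N_s$ for $t\ge s\ge 0$ (equivalently, $\mathbb{E}[N_t]=\mathbb{E}[e^{-\zeta X_t}]\,e^{-\pi(-\zeta)t}=1$ by the definition of $\pi$), so $N$ is a nonnegative right-continuous $\{\mathcal{F}_t\}$-martingale with $N_0=1$. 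Fixing $s\ge 0$ and a stopping time $\tau$, Doob's optional sampling theorem applied to $N$ and the bounded stopping times $\tau\wedge s\le s$ gives $\mathbb{E}[N_s\mid\mathcal{F}_{\tau\wedge s}]=N_{\tau\wedge s}$; since $\{\tau\le s\}\in\mathcal{F}_{\tau\wedge s}$ and $\mathcal{F}_\tau$ agrees with $\mathcal{F}_{\tau\wedge s}$ on $\{\tau\le s\}$, rewriting $e^{-\beta t}P_t=e^{-(\beta-\delta)t}N_t$ we obtain
\beq
\mathbb{E}\big[\,\mathbf{1}_{\{\tau\le s\}}\,e^{-\beta s}P_s\ \big|\ \mathcal{F}_\tau\,\big]=\mathbf{1}_{\{\tau\le s\}}\,e^{-(\beta-\delta)(s-\tau)}\,e^{-\beta\tau}P_\tau ,\qquad s\ge 0 ,
\eeq
which is the natural ``strong-Markov'' extension of \eqref{Psupermartingale}. (Alternatively one may invoke the strong Markov property of the L\'{e}vy process $X$: for $s\ge\tau$ on $\{\tau<\infty\}$, write $P_s=P_\tau\,e^{\delta(s-\tau)-\zeta(X_s-X_\tau)-\pi(-\zeta)(s-\tau)}$ and use that $(X_{\tau+u}-X_\tau)_{u\ge 0}$ is independent of $\mathcal{F}_\tau$ with the law of $X$, together with $\mathbb{E}[e^{-\zeta X_u}]=e^{\pi(-\zeta)u}$.)

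Next, since $(\omega,t)\mapsto\mathbf{1}_{\{\tau(\omega)\le t\}}\,e^{-\beta t}P_t(\omega)$ is nonnegative and jointly measurable, the conditional Fubini--Tonelli theorem yields
\beq
\mathbb{E}\Big[\int_\tau^\infty e^{-\beta t}P_t\,dt\ \Big|\ \mathcal{F}_\tau\Big]=\int_0^\infty\mathbb{E}\big[\,\mathbf{1}_{\{\tau\le t\}}\,e^{-\beta t}P_t\ \big|\ \mathcal{F}_\tau\,\big]\,dt .
\eeq
Inserting the identity above and evaluating $\int_\tau^\infty e^{-(\beta-\delta)(t-\tau)}\,dt=\tfrac{1}{\beta-\delta}$ (finite because $\beta>\delta$, cf.\ Assumption \ref{assr0}) gives $e^{-\beta\tau}P_\tau/(\beta-\delta)$ on $\{\tau<\infty\}$, while on $\{\tau=\infty\}$ both sides vanish, consistently with the convention $e^{-\beta\tau}P_\tau:=0$ and with $\mathbf{1}_{\{\tau\le t\}}\equiv 0$ there; this proves \eqref{Psupermartingale3}. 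Taking $\tau\equiv 0$ and then expectations --- or, equivalently, applying Tonelli to $\int_0^\infty e^{-\beta t}P_t\,dt$ together with \eqref{Psupermartingale} at $s=0$ and $P_0=1$ --- gives $\mathbb{E}\big[\int_0^\infty e^{-\beta t}P_t\,dt\big]=\int_0^\infty e^{-(\beta-\delta)t}\,dt=\tfrac{1}{\beta-\delta}$, i.e.\ \eqref{Psupermartingale2}.

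The only genuinely delicate point is the passage from deterministic to random times: making the optional-sampling step precise and keeping track of the measure-theoretic bookkeeping ($\{\tau\le s\}\in\mathcal{F}_{\tau\wedge s}$, the agreement of $\mathcal{F}_\tau$ and $\mathcal{F}_{\tau\wedge s}$ on that event, and the treatment of $\{\tau=\infty\}$ via the stated convention). The conditional Tonelli interchange and the final scalar exponential integral are entirely routine.
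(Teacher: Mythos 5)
Your argument is correct, and it reaches the two identities by a somewhat different route than the paper. The paper proves \eqref{Psupermartingale2} directly from Tonelli's theorem and the explicit form \eqref{P} of $P$, and then establishes \eqref{Psupermartingale3} by writing $\mathbb{E}\big[\int_\tau^\infty e^{-\beta s}P_s\,ds\,\big|\,\mathcal{F}_\tau\big]=e^{-\beta\tau}P_\tau\int_0^\infty e^{-\beta t}\,\mathbb{E}\big[P_{t+\tau}/P_\tau\,\big|\,\mathcal{F}_\tau\big]\,dt$ and invoking the exponential structure of $P$ together with the stationary, independent increments of $X$ \emph{at the stopping time} $\tau$ --- i.e.\ essentially the strong Markov property, which the paper asserts rather than rederives; this is exactly the alternative you mention in parentheses. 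Your main route instead takes the deterministic-time identity \eqref{Psupermartingale} of Lemma \ref{lemma1}, recasts it as the martingale property of $N_t=e^{-\delta t}P_t$, and upgrades it to random times via Doob's optional sampling at the bounded stopping times $\tau\wedge s$ together with the localization facts $\{\tau\le s\}\in\mathcal{F}_{\tau\wedge s}$ and the agreement of $\mathcal{F}_\tau$ with $\mathcal{F}_{\tau\wedge s}$ on $\{\tau\le s\}$; conditional Tonelli then yields \eqref{Psupermartingale3}, you treat $\{\tau=\infty\}$ explicitly through the paper's convention $e^{-\beta\tau}P_\tau:=0$, and you obtain \eqref{Psupermartingale2} as the special case $\tau\equiv 0$ (using $P_0=1$) rather than by a separate computation. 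What your approach buys is that it needs only the right-continuous martingale property of $N$ plus Doob's theorem, so the delicate deterministic-to-random-time passage is fully justified without appealing to independence of increments after a stopping time; what the paper's approach buys is brevity, since the strong-Markov-type identity makes the conditional computation a one-line change of variables. Both proofs are sound, and your measure-theoretic bookkeeping (including the $\{\tau=\infty\}$ case) is accurate.
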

\begin{proof}
Applying Tonelli's Theorem and using the definition of $P$ (cf.\ Definition \eqref{P}) imply \eqref{Psupermartingale2}. As for \eqref{Psupermartingale3} notice that for any stopping time $\tau \in [0,\infty]$
 \begin{eqnarray*}
 \label{Psupermartingale3bis}
 & & \mathbb{E}\bigg[\int_{\tau}^{\infty}e^{- \beta  s} P_s ds\Big| \mathcal{F}_{\tau}\bigg] = e^{- \beta \tau} \int_0^{\infty}e^{-\beta t} \mathbb{E}[P_{t+\tau}\,| \mathcal{F}_{\tau}] dt \nonumber \\
 & & \hspace{1.5cm} = e^{-\beta \tau}P_{\tau} \int_0^{\infty}e^{-\beta t} \mathbb{E}\Big[\frac{P_{t+\tau}}{P_{\tau}}\,\Big| \mathcal{F}_{\tau}\Big] dt = e^{-\beta \tau}P_{\tau} \int_0^{\infty}e^{-\beta t} \mathbb{E}[P_t] dt \nonumber \\
 & & \hspace{1.5cm} = e^{-\beta \tau} \frac{P_{\tau}}{\beta-\delta},
 \end{eqnarray*}
where the third equality follows by the exponential form of the price process and the properties of the Markov process $X$ (cf.\ Definition \ref{assX}), whereas the last equality from \eqref{Psupermartingale2}.
\end{proof}

\begin{lemma}
\label{lemmaFubini}
Under Assumption \ref{assr0}, for any admissible procurement policy $\overline{\nu} \in \mathcal{S}$ it holds
\begin{equation}
\label{Fubini}
\mathbb{E}\bigg[\int_{0}^{\infty}e^{- \beta t} P_t \overline{\nu}_t dt \bigg] = \frac{1}{\beta-\delta}\,\mathbb{E}\bigg[\int_{[0,\infty)}^{}e^{- \beta t} P_t d\overline{\nu}_t\bigg].
\end{equation}
\end{lemma}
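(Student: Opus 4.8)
The plan is to reduce \eqref{Fubini} to a Fubini--Tonelli interchange combined with the conditional expectation formula \eqref{Psupermartingale3}. Since every $\overline{\nu}\in\mathcal{A}$ is nondecreasing, left-continuous and vanishes at $0$, its Lebesgue--Stieltjes measure satisfies $\overline{\nu}_t=\int_{[0,t)}d\overline{\nu}_s$ for all $t\ge0$. Substituting this into the left member of \eqref{Fubini} and applying Tonelli's theorem on $\Omega\times[0,\infty)\times[0,\infty)$ --- legitimate because the integrand $e^{-\beta t}P_t\,\mathbf{1}_{\{s<t\}}$ is nonnegative and jointly measurable, $P$ being $\{\mathcal{F}_t\}$-progressively measurable by Definition \ref{assX} --- one obtains
\[
\mathbb{E}\bigg[\int_{0}^{\infty}e^{-\beta t}P_t\,\overline{\nu}_t\,dt\bigg]
=\mathbb{E}\bigg[\int_{[0,\infty)}\Phi_s\,d\overline{\nu}_s\bigg],\qquad\text{where }\ \Phi_s:=\int_{s}^{\infty}e^{-\beta t}P_t\,dt.
\]
Hence it suffices to show that, under the $d\overline{\nu}_s$-integral, $\Phi_s$ may be replaced by $\mathbb{E}[\Phi_s\mid\mathcal{F}_s]$, which by \eqref{Psupermartingale3} (applied at the deterministic stopping time $\tau=s$) equals $e^{-\beta s}P_s/(\beta-\delta)$; the constant $1/(\beta-\delta)$ then factors out, giving \eqref{Fubini}.

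This replacement is the heart of the matter and rests on the predictability of $\overline{\nu}$: being left-continuous and $\{\mathcal{F}_t\}$-adapted, $\overline{\nu}$ is a predictable process, so against the random measure $d\overline{\nu}$ a nonnegative measurable process integrates exactly as does its predictable projection. The predictable projection of $(\omega,s)\mapsto\Phi_s(\omega)$, evaluated at the predictable time $s$, is $\mathbb{E}[\Phi_s\mid\mathcal{F}_{s-}]$, and this coincides with $\mathbb{E}[\Phi_s\mid\mathcal{F}_s]$ because $\{\mathcal{F}_t\}$ is right-continuous. Alternatively, to bypass projection theorems, one approximates $\overline{\nu}$ by the left-continuous dyadic step processes $\overline{\nu}^{\,n}$ that equal $\overline{\nu}_{k2^{-n}}$ on $(k2^{-n},(k+1)2^{-n}]$ and $0$ at $0$; one checks that $\overline{\nu}^{\,n}\in\mathcal{A}$, that $\overline{\nu}^{\,n}\uparrow\overline{\nu}$ pointwise, and that the measure $d\overline{\nu}^{\,n}$ is carried by the grid with $\mathcal{F}_{k2^{-n}}$-measurable masses $\Delta^n_k=\overline{\nu}_{k2^{-n}}-\overline{\nu}_{(k-1)2^{-n}}$. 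Conditioning each term on $\mathcal{F}_{k2^{-n}}$ and invoking \eqref{Psupermartingale3} at $\tau=k2^{-n}$ then establishes \eqref{Fubini} with $\overline{\nu}$ replaced by $\overline{\nu}^{\,n}$, and a routine monotone/dominated convergence argument --- for which the integrability defining $\mathcal{S}$ in \eqref{admissibleset} guarantees the relevant limits are finite --- transfers the identity to $\overline{\nu}$.

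Finally, the hypothesis $\overline{\nu}\in\mathcal{S}$ is used only to make the left-hand side of \eqref{Fubini} finite; the established identity then also shows $\mathbb{E}[\int_{[0,\infty)}e^{-\beta t}P_t\,d\overline{\nu}_t]<\infty$, which is the role the lemma plays later on. The single genuinely delicate point is the interchange in the second paragraph: since $\Phi_s$ is not $\mathcal{F}_s$-measurable, one cannot simply ``take out what is known'', so some structural input --- predictability of $\overline{\nu}$, or the explicit step-process approximation --- is indispensable; everything else is bookkeeping.
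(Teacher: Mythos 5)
Your route is essentially the paper's: Tonelli to turn the left side into $\mathbb{E}\big[\int_{[0,\infty)}\Phi_s\,d\overline{\nu}_s\big]$ with $\Phi_s=\int_s^\infty e^{-\beta t}P_t\,dt$, then a projection argument to replace $\Phi_s$ by its conditional expectation, then \eqref{Psupermartingale3} at $\tau=s$. The paper does exactly this, citing Dellacherie--Meyer, Ch.~VI, Thm.~57 for the projection step.

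The one genuine flaw is in your justification of that step. You invoke the \emph{predictable} projection, whose value at the (predictable) time $s$ is $\mathbb{E}[\Phi_s\mid\mathcal{F}_{s-}]$, and then assert this equals $\mathbb{E}[\Phi_s\mid\mathcal{F}_s]$ ``because $\{\mathcal{F}_t\}$ is right-continuous''. That reason is false: right-continuity gives $\mathcal{F}_{s+}=\mathcal{F}_s$, not $\mathcal{F}_{s-}=\mathcal{F}_s$, and since $P$ may jump, $\mathcal{F}_{s-}$-conditioning is not a priori the same as $\mathcal{F}_s$-conditioning. The cleanest repair is to use the \emph{optional} projection instead: $\overline{\nu}$, being left-continuous and adapted, is predictable, hence optional, so the Dellacherie--Meyer identity applies with the optional projection ${}^o\Phi$, and \eqref{Psupermartingale3} (stated for all stopping times) identifies ${}^o\Phi_s=e^{-\beta s}P_s/(\beta-\delta)$ directly; this is precisely the paper's argument. (Alternatively one can stay with predictable projections and apply the projection identity twice, or note that in the present L\'evy setting $\mathcal{F}_{s-}=\mathcal{F}_s$ modulo null sets for fixed $s$ because a L\'evy process a.s.\ has no jump at a deterministic time --- but not because of right-continuity.) Your dyadic-approximation alternative is fine in spirit and does give \eqref{Fubini} for each step process; however, the passage to the limit on the Stieltjes side, $\mathbb{E}\big[\int e^{-\beta s}P_s\,d\overline{\nu}^{\,n}_s\big]\to\mathbb{E}\big[\int e^{-\beta s}P_s\,d\overline{\nu}_s\big]$, is not ``routine monotone/dominated convergence'': these integrals are not monotone in $n$ and no dominating bound is exhibited, so this branch as written is a sketch rather than a proof.
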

\begin{proof}
Fix $\overline{\nu} \in \mathcal{S}$, then by Tonelli's Theorem one has
\begin{eqnarray*}
\label{Tonelli1}
&&\int_{0}^{\infty}e^{- \beta t} P_t \overline{\nu}_t dt = \int_{0}^{\infty}e^{- \beta t} P_t \bigg(\int_{[0,t)} d\overline{\nu}_s \bigg) dt = \int_{[0,\infty)}^{} \bigg(\int_s^{\infty} e^{- \beta t} P_t dt \bigg) d\overline{\nu}_s.
\end{eqnarray*}
Taking expectations one obtains
\begin{eqnarray*}
\label{Tonelli2}
&&\mathbb{E}\bigg[\int_{0}^{\infty}e^{- \beta t} P_t \overline{\nu}_t dt\bigg] = \mathbb{E}\bigg[\int_{[0,\infty)}^{} \mathbb{E}\bigg[\int_s^{\infty} e^{- \beta t} P_t dt  \Big| \mathcal{F}_s\bigg]d\overline{\nu}_s\bigg] = \frac{1}{\beta-\delta}\mathbb{E}\bigg[\int_{[0,\infty)}^{}e^{- \beta t} P_t d\overline{\nu}_t\bigg],
\end{eqnarray*}
where the first equality follows from Dellacherie and Meyer (1982), Chapter VI, Theorem 57, whereas the last equality follows from \eqref{Psupermartingale3} with $\tau = s$.
\end{proof}

\begin{proposition}
\label{properconcave}
Under Assumptions \ref{assr0} and \ref{assr}, the new value function $W(\cdot)$ is concave and proper. Moreover
\begin{equation}
\label{boundonW}
W(y) \leq \frac{\lambda\alpha_s y}{\beta-\delta} + \frac{\lambda \alpha}{r+\lambda -\delta}\mathbb{E}[D].
\end{equation}
\end{proposition}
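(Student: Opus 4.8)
The plan is to prove the three assertions — concavity, the bound~\eqref{boundonW} (which also yields $W<\infty$), and properness (i.e.\ $W>-\infty$) — in that order.

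\emph{Concavity.} First I would check that $\mathcal{S}$ is a convex subset of $\mathcal{A}$: nondecreasingness, left-continuity, $\{\mathcal{F}_t\}$-adaptedness, the normalisation $\overline{\nu}_0=0$, and the integrability condition in~\eqref{admissibleset} are all stable under convex combinations. Since $y\mapsto\Gamma(\omega,t,y)$ is concave for every $(\omega,t)$ by Lemma~\ref{propertiesPi}(ii) and $(y,\overline{\nu})\mapsto y+\overline{\nu}_t$ is affine, the map $(y,\overline{\nu})\mapsto\Gamma(\omega,t,y+\overline{\nu}_t)$ is concave, while $-\int_{[0,\infty)}e^{-\beta t}P_t\,d\overline{\nu}_t$ is linear in $\overline{\nu}$. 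Hence, given $y_1,y_2\ge0$, $\kappa\in[0,1]$ and admissible $\overline{\nu}^1,\overline{\nu}^2$, the policy $\kappa\overline{\nu}^1+(1-\kappa)\overline{\nu}^2\in\mathcal{S}$ is admissible for $\kappa y_1+(1-\kappa)y_2$, and integrating the pointwise concavity inequality in $t$ and taking $\mathbb{E}$ gives $\hat{\mathcal{J}}(\kappa y_1+(1-\kappa)y_2,\kappa\overline{\nu}^1+(1-\kappa)\overline{\nu}^2)\ge\kappa\hat{\mathcal{J}}(y_1,\overline{\nu}^1)+(1-\kappa)\hat{\mathcal{J}}(y_2,\overline{\nu}^2)$; taking suprema over $\overline{\nu}^1,\overline{\nu}^2\in\mathcal{S}$ yields concavity of $W$. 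Along the way one verifies that $\hat{\mathcal{J}}(y,\overline{\nu})$ is well defined in $[-\infty,\infty)$ for $\overline{\nu}\in\mathcal{S}$: the positive part of $\Gamma(\cdot,\cdot,y+\overline{\nu}_\cdot)$ is dominated by the expression appearing in the upper-bound step below, which has finite integral, and $\mathbb{E}[\int_{[0,\infty)}e^{-\beta t}P_t\,d\overline{\nu}_t]$ is finite for $\overline{\nu}\in\mathcal{S}$ by Lemma~\ref{lemmaFubini}.

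\emph{Upper bound.} From the first inequality in~\eqref{boundsH}, $c\ge0$ (Assumption~\ref{Assumptioncost}) and $e^{-(r+\lambda)t}e^{-\varepsilon t}=e^{-\beta t}$ one gets the pointwise estimate
\[
\Gamma(\omega,t,y+\overline{\nu}_t)\ \le\ \lambda\alpha_s\,e^{-\beta t}P_t\,(y+\overline{\nu}_t)+\lambda\alpha\,e^{-(r+\lambda)t}P_t\,\mathbb{E}[D].
\]
Integrating in $t$, taking expectations, and using $\mathbb{E}[P_t]=e^{\delta t}$ with Assumption~\ref{assr0} for the $\mathbb{E}[D]$ term,~\eqref{Psupermartingale2} for $y\,\mathbb{E}[\int_0^\infty e^{-\beta t}P_t\,dt]$, and Lemma~\ref{lemmaFubini} to rewrite $\mathbb{E}[\int_0^\infty e^{-\beta t}P_t\overline{\nu}_t\,dt]=(\beta-\delta)^{-1}\mathbb{E}[\int_{[0,\infty)}e^{-\beta t}P_t\,d\overline{\nu}_t]$, one arrives at
\[
\hat{\mathcal{J}}(y,\overline{\nu})\ \le\ \frac{\lambda\alpha_s\,y}{\beta-\delta}+\frac{\lambda\alpha}{r+\lambda-\delta}\,\mathbb{E}[D]-\frac{\beta-\delta-\lambda\alpha_s}{\beta-\delta}\,\mathbb{E}\Big[\int_{[0,\infty)}e^{-\beta t}P_t\,d\overline{\nu}_t\Big].
\]
By Assumption~\ref{assr} the coefficient of the last integral is strictly negative, and that integral is nonnegative since $P>0$ and $\overline{\nu}$ is nondecreasing; discarding it and taking the supremum over $\overline{\nu}\in\mathcal{S}$ gives~\eqref{boundonW}, in particular $W(y)<\infty$ for every $y\ge0$.

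\emph{Lower bound and properness.} It remains to show $W>-\infty$, for which I would test with $\overline{\nu}\equiv0\in\mathcal{S}$: then $W(y)\ge\hat{\mathcal{J}}(y,0)=\mathbb{E}[\int_0^\infty\Gamma(t,y)\,dt]$. By the second inequality in~\eqref{boundsH} and $P_t>0$ one has $\Gamma(t,y)\ge e^{-(r+\lambda)t}\big(-\lambda(\alpha_p-\alpha_s)\mathbb{E}[D]\,P_t-c(ye^{-\varepsilon t})\big)$, whose expected time-integral is bounded below by $-\lambda(\alpha_p-\alpha_s)\mathbb{E}[D]/(r+\lambda-\delta)-\int_0^\infty e^{-(r+\lambda)t}c(ye^{-\varepsilon t})\,dt$, a finite real number by Assumption~\ref{assr0} and~\eqref{integrabilityc}. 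Hence $\hat{\mathcal{J}}(y,0)>-\infty$, so $W$ is real-valued, and therefore proper. The one genuinely delicate point is the bookkeeping in the upper-bound step: one must make sure that every expectation manipulated there is legitimate for $\overline{\nu}\in\mathcal{S}$ (the Tonelli exchanges, the splitting of the integral into its $y$-part and its $\overline{\nu}$-part, and the application of Lemma~\ref{lemmaFubini}), and it is exactly the strict inequality in Assumption~\ref{assr} that makes the term $\mathbb{E}[\int e^{-\beta t}P_t\,d\overline{\nu}_t]$ — which is not controlled uniformly in $\overline{\nu}\in\mathcal{S}$ — droppable rather than something one would have to estimate.
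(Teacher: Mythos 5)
Your proposal is correct and follows essentially the same route as the paper's proof: concavity via concavity of $\Gamma$ and affinity of the state in $(y,\overline{\nu})$, the upper bound via the first inequality in \eqref{boundsH}, positivity of $c$, Lemma \ref{lemmaFubini} and Assumption \ref{assr} to discard the nonpositive $\overline{\nu}$-term, and properness by testing with $\overline{\nu}\equiv 0$ together with \eqref{boundsH}, \eqref{integrabilityc} and Assumption \ref{assr0}. The only differences are cosmetic: you spell out convexity of $\mathcal{S}$, well-posedness of $\hat{\mathcal{J}}$, and keep the final negative term as a $d\overline{\nu}$-integral rather than the equivalent $\overline{\nu}_t\,dt$-integral the paper uses.
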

\begin{proof}
Recall that $\Gamma$ is concave by Lemma \ref{propertiesPi}. Then, using the affine property of $Y^{y,\overline{\nu}}$ in the control variable (cf.\ \eqref{inventorydyn2}), for any $\lambda \in (0,1)$, $y_1,y_2 \in \mathbb{R}_+$ and $\overline{\nu}_1, \overline{\nu}_2 \in \mathcal{S}$ we have
$$W(\lambda y_1 + (1-\lambda)y_2) \ \geq \ \hat{\mathcal{J}}(\lambda y_1 + (1-\lambda)y_2,\lambda \overline{\nu}_1 + (1-\lambda)\overline{\nu}_2) \ \geq \ \lambda \hat{\mathcal{J}}(y_1,\overline{\nu}_1) + (1 - \lambda)\hat{\mathcal{J}}(y_2,\overline{\nu}_2)$$
and concavity of $W(\cdot)$ follows.

We now show that $W$ is proper, that is $|W(y)| < \infty$ for any $y \geq 0$.
In fact
\begin{eqnarray}
\label{Wbiggerminusinfty}
W(y) \hspace{-0.25cm} & \geq & \hspace{-0.25cm} \hat{\mathcal{J}}(y,0) = \mathbb{E}\bigg[\int_{0}^{\infty} \lambda e^{-(r+\lambda)t}P_t\, H(ye^{-\varepsilon t})dt -\int_{0}^{\infty}e^{-(r+\lambda) t}c(ye^{-\varepsilon t}) dt\bigg]  \\
\hspace{-0.25cm} & \geq & \hspace{-0.25cm}  - \mathbb{E}\bigg[\lambda(\alpha_p-\alpha_s)\mathbb{E}[D]\int_{0}^{\infty}e^{-(r+\lambda)t}P_t dt + \int_{0}^{\infty}e^{-(r+\lambda) t}c(ye^{-\varepsilon t}) dt\bigg] > -\infty, \nonumber
\end{eqnarray}
where we have first used the second inequality in \eqref{boundsH}, then we have applied \eqref{integrabilityc}, Assumption \ref{TD} and Assumption \ref{assr0} to get the last inequality.

On the other hand, to show $W(y) < \infty$ for $y \geq 0$,
we fix an arbitrary $\overline{\nu} \in \mathcal{S}$ and we use the first inequality in \eqref{boundsH} and the fact that $c$ is positive to obtain
\begin{eqnarray}
\label{Wlessinfty}
& \hat{\mathcal{J}}(y,\overline{\nu}) &\leq \mathbb{E}\bigg[\int_{0}^{\infty} \lambda e^{-(r+\lambda)t}P_t \Big[\alpha_s e^{-\varepsilon t}(y + \overline{\nu}_t) + \alpha\mathbb{E}[D]\Big] dt -\int_{[0,\infty)}e^{-\beta t}P_t d\overline{\nu}_t\bigg] \nonumber \\
& &  =  \mathbb{E}\bigg[\int_{0}^{\infty} \lambda\alpha_s e^{- \beta t} P_t (y + \overline{\nu}_t) dt -\int_{[0,\infty)}e^{-\beta t}P_t d\overline{\nu}_t\bigg] + \frac{\lambda \alpha}{r+\lambda -\delta}\mathbb{E}[D]  \\
& &  = \frac{\lambda\alpha_s y}{\beta-\delta} - (\beta- \delta - \lambda\alpha_s)\mathbb{E}\bigg[\int_{0}^{\infty} e^{-\beta t} P_t \overline{\nu}_t dt\bigg] + \frac{\lambda \alpha}{r+\lambda -\delta}\mathbb{E}[D] \nonumber
\end{eqnarray}
where the last step follows from Lemma \ref{lemmaFubini}. Then passing to the supremum over $\overline{\nu} \in \mathcal{S}$ in \eqref{Wlessinfty} implies \eqref{boundonW} since $\beta- \delta - \lambda\alpha_s>0$  by Assumption \ref{assr}.
\end{proof}

\section{Existence of an Optimal Procurement Policy}
\label{Existence}
Existence of a solution $\overline{\nu}^*$ of concave (convex) singular stochastic control problems may be obtained by a suitable version of Koml\'os' Theorem (see for example Karatzas \& Wang, 2005; Riedel \& Su, 2011). In its classical formulation Koml\'os' Theorem (cf.\ Koml\'{o}s, 1967) states that if a sequence of random variables $\{Z_n, n \in \mathbb{N}\}$ is bounded from above in expectation, then there exists a subsequence $\{Z_{n_k}, k \in \mathbb{N}\}$ which converges a.e.\ in the Ces\`{a}ro sense to some random variable $Z$. Hence, if a maximizing (minimizing) sequence of admissible controls is Koml\'{o}s compact, then due to concavity (convexity) the limit provided by Koml\'os' Theorem turns out to be an optimal control policy. These arguments also work in our setting thanks to our assumptions.
Moreover, if $c$ is strictly convex, then $\hat{\mathcal{J}}(y,\cdot)$ of \eqref{jhat} is strictly concave in $\overline{\nu}$ for $y \geq 0$ fixed, and hence if a solution to \eqref{W} exists, then it is also unique.
\begin{theorem}
\label{existenceOC}
Let Assumptions \ref{assr0} and \ref{assr} hold. Then, for each fixed $y \geq 0$, there exists an optimal procurement policy $\overline{\nu}^{*}$ for problem \eqref{W}, i.e.\ $\hat{\mathcal{J}}(y,\overline{\nu}^{*})=W(y)$. Moreover, if $c(\cdot)$ is strictly convex then the optimal policy is unique (up to undistinguishability).
\end{theorem}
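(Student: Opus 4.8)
The plan is to follow the classical route for concave monotone-follower problems recalled just before the statement: extract an optimal control from a maximizing sequence by a Komlós-type compactness argument, and then conclude by combining concavity of $\hat{\mathcal{J}}(y,\cdot)$ with a semicontinuity property. Fix $y\ge0$ and take a maximizing sequence $\{\overline{\nu}^n\}_{n\ge1}\subset\mathcal{S}$ with $\hat{\mathcal{J}}(y,\overline{\nu}^n)\ge W(y)-1/n$, which is legitimate since $W(y)$ is finite by Proposition~\ref{properconcave}. The first ingredient is a uniform a priori bound: reading off the chain of (in)equalities \eqref{Wlessinfty} in the proof of Proposition~\ref{properconcave} and using Assumption~\ref{assr}, one gets for every $n$
\[
(\beta-\delta-\lambda\alpha_s)\,\mathbb{E}\!\left[\int_0^{\infty}e^{-\beta t}P_t\,\overline{\nu}^n_t\,dt\right]\ \le\ \frac{\lambda\alpha_s y}{\beta-\delta}+\frac{\lambda\alpha}{r+\lambda-\delta}\mathbb{E}[D]-W(y)+1\ =:\ M<\infty,
\]
hence $\sup_n\mathbb{E}[\int_0^{\infty}e^{-\beta t}P_t\overline{\nu}^n_t\,dt]<\infty$ (and, by Lemma~\ref{lemmaFubini}, also $\sup_n\mathbb{E}[\int_{[0,\infty)}e^{-\beta t}P_t\,d\overline{\nu}^n_t]<\infty$).

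Next I would invoke the $[0,\infty]$-valued version of Komlós' theorem (cf.\ Kabanov, 1999): for each rational $q\ge0$ the random variables $\overline{\nu}^n_q$ admit convex combinations that converge $\mathbb{Q}$-a.s., and a diagonal argument over the rationals produces a single sequence $\widetilde{\nu}^{\,n}$, each term being a convex combination of the tail $\{\overline{\nu}^k:k\ge n\}$ (so $\widetilde{\nu}^{\,n}\in\mathcal{S}$, since $\mathcal{S}$ is convex), such that $\widetilde{\nu}^{\,n}_q$ converges $\mathbb{Q}$-a.s.\ to some $[0,\infty]$-valued limit for every rational $q$. Regularising by left limits along the rationals defines a nondecreasing, left-continuous, $\{\mathcal{F}_t\}$-adapted process $\overline{\nu}^*$ with $\overline{\nu}^*_0=0$, and $\widetilde{\nu}^{\,n}_t\to\overline{\nu}^*_t$ $\mathbb{Q}$-a.s.\ at every continuity point of $\overline{\nu}^*$, hence for a.e.\ $t$. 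Fatou's lemma together with the bound $M$ then gives $\mathbb{E}[\int_0^{\infty}e^{-\beta t}P_t\overline{\nu}^*_t\,dt]\le M<\infty$, so $\overline{\nu}^*\in\mathcal{S}$.

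The core step is the limit passage. As $\Gamma$ is concave in its spatial argument (Lemma~\ref{propertiesPi}) and the controlled state depends affinely on $\overline{\nu}$, the functional $\hat{\mathcal{J}}(y,\cdot)$ is concave, so $\hat{\mathcal{J}}(y,\widetilde{\nu}^{\,n})\ge W(y)-1/n\to W(y)$; being also $\le W(y)$, the sequence $\{\widetilde{\nu}^{\,n}\}$ is maximizing, and it suffices to prove $\hat{\mathcal{J}}(y,\overline{\nu}^*)\ge\limsup_n\hat{\mathcal{J}}(y,\widetilde{\nu}^{\,n})$, which with $\overline{\nu}^*\in\mathcal{S}$ forces $\hat{\mathcal{J}}(y,\overline{\nu}^*)=W(y)$. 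To make the one-sided limit transparent I would first turn \eqref{Wlessinfty} into an identity. Writing $H(w)=\alpha_s w+\Phi(w)$ with $\Phi(w):=\alpha\mathbb{E}[D]-(\alpha+\alpha_p-\alpha_s)\int_w^{\infty}(z-w)f_D(z)\,dz$ (cf.\ \eqref{H2}) — so that $\Phi$ is continuous and, since $0\le\int_w^{\infty}(z-w)f_D(z)\,dz\le\mathbb{E}[D]$ and $\alpha+\alpha_p-\alpha_s\ge0$, bounded — and using \eqref{Psupermartingale2} and Lemma~\ref{lemmaFubini}, one obtains for every $\overline{\nu}\in\mathcal{S}$
\[
\hat{\mathcal{J}}(y,\overline{\nu})=\frac{\lambda\alpha_s y}{\beta-\delta}-(\beta-\delta-\lambda\alpha_s)\,\mathbb{E}\!\left[\int_0^{\infty}\!e^{-\beta t}P_t\overline{\nu}_t\,dt\right]+\mathbb{E}\!\left[\int_0^{\infty}\!\lambda e^{-(r+\lambda)t}P_t\,\Phi(e^{-\varepsilon t}(y+\overline{\nu}_t))\,dt\right]-\mathbb{E}\!\left[\int_0^{\infty}\!e^{-(r+\lambda)t}c(e^{-\varepsilon t}(y+\overline{\nu}_t))\,dt\right].
\]
Along $\widetilde{\nu}^{\,n}$: the first term is constant; the second is upper semicontinuous by Fatou's lemma (nonnegative integrand and $\beta-\delta-\lambda\alpha_s>0$ by Assumption~\ref{assr}); the third converges by dominated convergence, a dominating function being a deterministic constant times $e^{-(r+\lambda)t}P_t$, which is integrable since $\mathbb{E}[\int_0^{\infty}e^{-(r+\lambda)t}P_t\,dt]=(r+\lambda-\delta)^{-1}<\infty$ by Assumption~\ref{assr0}; and the fourth term is upper semicontinuous by Fatou's lemma ($c\ge0$ and continuous). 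Summing up these facts gives $\hat{\mathcal{J}}(y,\overline{\nu}^*)\ge\limsup_n\hat{\mathcal{J}}(y,\widetilde{\nu}^{\,n})=W(y)$, and therefore $\hat{\mathcal{J}}(y,\overline{\nu}^*)=W(y)$.

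Finally, when $c$ is strictly convex the fourth term in the displayed identity is strictly concave in $\overline{\nu}$ while the remaining ones stay affine or concave; hence if two optimal policies disagreed on a set of positive $\mathbb{Q}\otimes dt$-measure, their midpoint — which lies in $\mathcal{S}$ by convexity — would satisfy $\hat{\mathcal{J}}(y,\cdot)>W(y)$, a contradiction; and two nondecreasing, left-continuous processes that agree $\mathbb{Q}\otimes dt$-a.e.\ are indistinguishable, which yields uniqueness. I expect the delicate points to be the Komlós extraction of the second paragraph — producing a genuinely admissible limit $\overline{\nu}^*$ out of the merely $[0,\infty]$-valued, rational-time convex-combination limits and regularising it correctly — together with checking that the identity for $\hat{\mathcal{J}}$ used above holds termwise for every $\overline{\nu}\in\mathcal{S}$ (each of the four expectations being well defined, and finite at $\overline{\nu}=\overline{\nu}^*$), which is precisely what licenses the dominated-convergence and Fatou arguments.
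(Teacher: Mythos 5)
Your proof is correct and follows the same overall architecture as the paper's (maximizing sequence, the a priori bound extracted from \eqref{Wlessinfty} and Lemma~\ref{lemmaFubini}, a Koml\'os-type extraction of a limit control, concavity plus a one-sided limit passage, and strict concavity of the cost term for uniqueness), but the two technical steps are implemented along a genuinely different route. For the compactness step the paper first changes measure via \eqref{Ptilde}, so that the bound becomes a uniform $\widetilde{\mathbb{Q}}$-expectation bound on the total masses $Z^{(n)}_{\infty}$ of the discounted control measures, and then applies Kabanov's Lemma 3.5 for optional random measures to obtain Ces\`aro weak-$*$ convergence; you instead use the $[0,\infty]$-valued convex-combination form of Koml\'os at rational times, diagonalize, and regularize by left limits, recovering admissibility only afterwards via Fatou. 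This is legitimate (monotonicity upgrades convergence at rationals to convergence at all continuity points, hence a.e.\ in $t$, and adaptedness and left-continuity come for free from the regularization), though the correct reference for the no-integrability version you invoke is a Delbaen--Schachermayer-type lemma rather than Kabanov's Lemma 3.5, which presupposes exactly the $L^1$ bound the paper manufactures through the measure change. For the limit passage the paper uses a single reverse-Fatou bound under $\widetilde{\mathbb{Q}}$, whereas you split $\hat{\mathcal{J}}$ via $H(w)=\alpha_s w+\Phi(w)$, \eqref{Psupermartingale2} and Lemma~\ref{lemmaFubini} into four terms and treat them separately (Fatou for the two cost-type terms, dominated convergence for the bounded $\Phi$-term); both arguments rest on precisely Assumptions \ref{assr0} and \ref{assr}, and your version buys the avoidance of the change of measure at the price of verifying the termwise identity, which is indeed unproblematic since the $c$-term enters with a negative sign so $\hat{\mathcal{J}}$ is well defined in $[-\infty,\infty)$ and the inequality $\hat{\mathcal{J}}(y,\overline{\nu}^*)\geq W(y)$ a posteriori rules out the value $-\infty$. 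Your uniqueness argument coincides with the paper's.
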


\begin{proof}
Take a maximizing sequence $\{\overline{\nu}^{(n)}\}_{n \in \mathbb{N}} \subset \mathcal{S}$; i.e.\ a sequence such that $\lim_{n \rightarrow \infty}\hat{\mathcal{J}}(y,\overline{\nu}^{(n)})=W(y)$.
Without loss of generality, we may take $\hat{\mathcal{J}}(y,\overline{\nu}^{(n)}) \geq W(y) - \frac{1}{n}$. Recall $\beta-\delta:=r+\lambda +\varepsilon - \delta > 0$ and use arguments similar to those in \eqref{Wlessinfty} to obtain, for any $n \in \mathbb{N}$,
\begin{eqnarray*}
\label{stima1}
\hspace{-0.6cm}W(y) - \frac{1}{n} & \leq &   \hat{\mathcal{J}}(y,\overline{\nu}^{(n)}) \; \leq \; \frac{\lambda \alpha_s y}{\beta-\delta} + \frac{\lambda \alpha}{r+\lambda -\delta}\mathbb{E}[D] - (\beta-\delta-\lambda\alpha_s)\mathbb{E}\bigg[\int_{0}^{\infty}e^{-\beta t} P_t \overline{\nu}^{(n)}_t dt\bigg],
\end{eqnarray*}
hence also
\beq
\label{bounded}
\hspace{-0.6cm}(\beta-\delta-\lambda\alpha_s)\mathbb{E}\bigg[\int_{0}^{\infty}e^{-\beta t} P_t \overline{\nu}^{(n)}_t dt\bigg] \ \leq \ \frac{\lambda \alpha_s y}{\beta-\delta}  + \frac{\lambda \alpha}{r+\lambda -\delta}\mathbb{E}[D]-W(y) + \frac{1}{n} \nonumber
\eeq
and
\beq
\label{L1}
\sup_{n \in \mathbb{N}}\mathbb{E}\bigg[\int_{0}^{\infty}e^{- \beta t} P_t \overline{\nu}^{(n)}_t dt\bigg] \  \leq \ \frac{1}{\beta-\delta-\lambda\alpha_s}\left(\frac{\lambda \alpha_s y}{\beta-\delta}  + \frac{\lambda \alpha}{r+\lambda -\delta}\mathbb{E}[D]-W(y) +1\right)
\eeq
where the right hand side is finite due to properness of $W$ (cf. Proposition \ref{properconcave}). Then by Lemma \ref{lemmaFubini} we obtain

\beq
\label{L1bis}
\sup_{n \in \mathbb{N}}\mathbb{E}\bigg[\int_{[0,\infty)}e^{- \beta t} P_t d\overline{\nu}^{(n)}_t\bigg] \leq \frac{\beta-\delta}{\beta-\delta-\lambda\alpha_s}\left(\frac{\lambda \alpha_s y}{\beta-\delta}  + \frac{\lambda \alpha}{r+\lambda -\delta}\mathbb{E}[D]-W(y) +1\right).
\eeq
Now we make a change of probability. In fact we define the equivalent probability measure $\widetilde{\mathbb{Q}}$ on $\mathcal{F}_t$ by setting
\beq
\label{Ptilde}
\frac{d\widetilde{\mathbb{Q}}}{d\mathbb{Q}}\Big|_{\mathcal{F}_t} = e^{-\delta t} P_t, \quad t \geq 0,
\eeq
and we denote by $\widetilde{\mathbb{E}}[\cdot]$ the expectation under $\widetilde{\mathbb{Q}}$. Then, in terms of the new probability measure, \eqref{L1bis} becomes
\beq
\label{L1tris}
\sup_{n \in \mathbb{N}}\widetilde{\mathbb{E}}\bigg[\int_{[0,\infty)}e^{- (\beta-\delta) t} d\overline{\nu}^{(n)}_t \bigg] \leq \frac{\beta-\delta}{\beta-\delta-\lambda\alpha_s}\left(\frac{\lambda \alpha_s y}{\beta-\delta}  + \frac{\lambda \alpha}{r+\lambda -\delta}\mathbb{E}[D]-W(y) +1\right).
\eeq
Then the sequence of nondecreasing, left-continuous, adapted processes $Z^{(n)}_t:=\int_{[0,t)} e^{- (\beta-\delta) s} d\overline{\nu}^{(n)}_s$ satisfies
\begin{equation}
\label{L1four}
\sup_{n \in \mathbb{N}}\widetilde{\mathbb{E}}\Big[Z^{(n)}_{\infty}\Big] < \infty.
\end{equation}
The mapping $\overline{\nu}^{(n)} \mapsto Z^{(n)}$ is one to one and onto, and its inverse gives $\overline{\nu}^{(n)}_t:=\int_{[0,t)} e^{(\beta-\delta) s} dZ^{(n)}_s$.

By a version of Koml\'{o}s Theorem for optional random measures\footnote{Let $\mathcal{V}_T$ denote the space of positive finite measures on $[0,T]$, $T\in (0,\infty]$, with the topology of weak-*convergence. Recall that an optional random measure is simply a $\mathcal{V}_T$-valued random variable $Z$ such that the process $Z_t(\omega):=Z(\omega;[0,t))$ is adapted.} (see Lemma 3.5 in Kabanov, 1999) there exists a subsequence $\{Z^{(n_k)}\}_{k \in \mathbb{N}} \subset \{Z^{(n)}\}_{n \in \mathbb{N}}$ and an optional random measure $Z^*$ such that $Z^{(n_k)}$ converges weakly in the Ces\`{a}ro sense to $Z^*$ a.s.; that is,
\beq
\label{Komlos}
\lim_{j \rightarrow \infty}\frac{1}{j}\sum_{k=1}^j \int_{[0,\infty)} f_s dZ^{(n_k)}_s = \int_{[0,\infty)}f_s dZ^*_s, \quad \widetilde{\mathbb{Q}}-a.s.
\eeq
for any bounded function $f:\mathbb{R}_+ \mapsto \mathbb{R}_+$ which is continuous $dZ^*$-a.e.\ in $\mathbb{R}_+$.
Then we may set $\overline{\nu}^*_t:=\int_{[0,t)} e^{(\beta-\delta) s}dZ^*_s=\int_{[0,\infty)} \mathds{1}_{[0,t)}(s)e^{(\beta-\delta) s} dZ^*_s$ and rewrite \eqref{Komlos} as
\beq
\label{Komlos2}
\lim_{j \rightarrow \infty} \xi^{(j)}_t = \overline{\nu}^*_t, \ \ \ \ \ \ \ \widetilde{\mathbb{Q}}-a.s.
\eeq
with $\xi^{(j)}_t:=\frac{1}{j}\sum_{k=1}^j \overline{\nu}^{(n_k)}_t$. Since each $\xi^{(j)}$ is a convex combination of the first $j$ elements of the subsequence of $\{\overline{\nu}^{(n)}\}_{n \in \mathbb{N}}$, we have
\beq
\label{newM}
\liminf_{j \rightarrow \infty} \widetilde{\mathbb{E}}\bigg[\int_{0}^{\infty}e^{-(\beta-\delta) t} \xi^{(j)}_t dt \bigg] < \infty
\eeq
by \eqref{L1}. Moreover, $t \mapsto \overline{\nu}^*_t$ is nondecreasing and hence the set of its points of discontinuity has zero Lebesgue measure; therefore Girsanov Theorem (cf.\ \eqref{Ptilde}), Fatou's lemma and \eqref{Komlos2} together with \eqref{newM} yield
$$\mathbb{E}\bigg[\int_{0}^{\infty}e^{- \beta t} P_t \overline{\nu}^*_t dt \bigg] = \widetilde{\mathbb{E}}\bigg[\int_{0}^{\infty}e^{- (\beta-\delta) t} \overline{\nu}^{*}_t dt\bigg]  < \infty.$$
It is possible to show that $\overline{\nu}^*$ admits a left-continuous modification which we still denote by $\overline{\nu}^*$.
We conclude that $\overline{\nu}$ is admissible, i.e.\ $\overline{\nu} \in \mathcal{S}$.\\
\indent In order to prove that $\overline{\nu}^*$ is optimal, it suffices to show that
\beq
\label{if}
\hat{\mathcal{J}}(y,\overline{\nu}^*) \geq W(y).
\eeq
We start from
\beq
\label{dom1}
\hat{\mathcal{J}}(y,\xi^{(j)}) = \mathbb{E}\bigg[\int_0^{\infty} \Big(\Gamma(t, y + \xi^{(j)}_t) -(\beta-\delta)e^{-\beta t} P_t \ \xi^{(j)}_t\Big) dt\bigg]
\eeq
where we have used Lemma \ref{lemmaFubini}. Then, under the new probability measure $\widetilde{\mathbb{Q}}$ (cf.\ \eqref{Ptilde}), we have
\beq
\label{dom2}
\hat{\mathcal{J}}(y,\xi^{(j)})= \widetilde{\mathbb{E}}\Big[\int_0^{\infty} \Phi(\xi^{(j)}_t) dt \Big]
\eeq
where $$\Phi(\xi^{(j)}_t):= \frac{e^{\delta t}}{P_t}\Gamma(t, y + \xi^{(j)}_t) - (\beta-\delta)e^{-(\beta-\delta) t}\xi^{(j)}_t.$$
Also, for each $j \in \mathbb{N}$,
$$\Phi(\xi^{(j)}_t) \leq \lambda \alpha_s y e^{-(\beta-\delta) t} + \lambda \alpha e^{-(r+\lambda -\delta)t}\mathbb{E}[D] \qquad \widetilde{\mathbb{Q}}-a.s.,$$
by \eqref{Pi}, the first inequality in \eqref{boundsH}, $c(\cdot) \geq 0$ (cf.\ Assumption \eqref{Assumptioncost}), and Assumption \ref{assr}. Therefore, by applying the reverse Fatou Lemma, \eqref{Komlos}, concavity of $\hat{\mathcal{J}}(y,\cdot)$ and Ces\`{a}ro Mean Theorem, we obtain
\beq
\label{dom3}
\hat{\mathcal{J}}(y,\overline{\nu}^*) \geq \limsup_{j \rightarrow \infty} \hat{\mathcal{J}}(y,\xi^{(j)}) \geq \limsup_{j \rightarrow \infty}\frac{1}{j}\sum_{k=1}^j \hat{\mathcal{J}}(y,\overline{\nu}^{(n_k)}) =W(y),
\eeq
hence $\overline{\nu}^*$ is optimal. As a subproduct, we also have that $\{\xi^{(j)}\}_{j \in \mathbb{N}}$ is itself a maximizing sequence of policies.

It remains to show that the optimal policy is unique (up to undistinguishability) if $c(\cdot)$ is strictly convex.
Let $\overline{\nu}^{*,1}$ and $\overline{\nu}^{*,2}$ be two optimal strategies and define the admissible procurement strategy $\hat{\nu}:=\frac{1}{2}\overline{\nu}^{*,1} + \frac{1}{2}\overline{\nu}^{*,2}$. We then have
\begin{eqnarray*}
0 & \hspace{-0.25cm}\geq \hspace{-0.25cm}& \hat{\mathcal{J}}(y,\hat{\nu}) - W(y) = \hat{\mathcal{J}}(y,\hat{\nu}) - \frac{1}{2}\hat{\mathcal{J}}(y,\overline{\nu}^{*,1}) - \frac{1}{2} \hat{\mathcal{J}}(y,\overline{\nu}^{*,2}) \nonumber \\
& \hspace{-0.25cm} = \hspace{-0.25cm}& {\mathbb{E}}\bigg[\int_0^{\infty} \Big(\Gamma(t, y + \hat{\nu}_t) - \frac{1}{2}\Gamma(t, y + \overline{\nu}^{*,1}_t) - \frac{1}{2}\Gamma(t, y + \overline{\nu}^{*,2}_t)\Big) dt\bigg] \geq 0, \nonumber
\end{eqnarray*}
where the last inequality follows by concavity of $\Gamma(t, \cdot)$ (cf. Lemma \eqref{propertiesPi}).
Thus the inequalities above must be equalities and again by concavity of $\Gamma(t, \cdot)$ it must be
$$
\Gamma(t, y + \hat{\nu}_t)=\frac{1}{2}\Gamma(t, y + \overline{\nu}^{*,1}_t) +\frac{1}{2}\Gamma(t, y + \overline{\nu}^{*,2}_t), \quad \mathbb{Q}-\text{a.s.\,\, for \,\,a.e.}\ t\geq 0.
$$
Hence $\overline{\nu}^{*,1}_t = \overline{\nu}^{*,2}_t$, $\mathbb{Q}$-a.s. for a.e.\ $t\geq0$, by noticing that
the assumption of strictly convexity of $c(\cdot)$ implies the strict concavity of $\Gamma(t, \cdot)$.
Therefore, by left-continuity of $\overline{\nu}^{*,i}$, $i=1,2$, we conclude that $\overline{\nu}^{*,1}$ and $\overline{\nu}^{*,2}$ are indistinguishable.
\end{proof}

\begin{remark}
\label{Remaexistence}
Notice that the existence of an optimal procurement strategy may be obtained for random demand times $\Theta$ with continuous density more general than the exponential one as long as their hazard rate $h_t$ satisfies the condition
\[
h_t<\frac{r+\varepsilon -\delta}{\alpha_s},
\]
needed to prove properness of the value function $W(y)$.
Such condition is a quite strong requirement on the hazard rate $h_t$, which is often unbounded, although it is satisfied by
some well known distributions as the log-normal or the log-logistic distributions for suitable parameters.
\end{remark}


\section{Characterization of Optimal Procurement Policies}
\label{FOCandbasecapacity}

Theorem \ref{existenceOC} provides the existence of an optimal policy $\overline{\nu}^* \in \mathcal{S}$ for the procurement problem without giving any information about its nature.
In this section we provide a complete characterization of the optimal procurement policy through
generalized stochastic first order conditions. Such approach does not need \textsl{a priori} smoothness of the value function, neither of the boundary between the investment and the no-investment regions, in order to determine the optimal policy. Therefore it allows to overcome the regularity issues dealt with in the HJB quasi-variational inequality approach arising from our two-dimensional setting with state process $(Y_t,P_t)$.

Characterizing the optimal controls by first order conditions appeared in Bertola (1998) for a profit maximizing firm with Cobb-Douglas operating profit function and uncertainty given by a geometric Brownian motion. Such concept
was then developed in Bank \& Riedel (2001) in the case of a static budget constraint, in Bank (2005) in the case of a stochastic dynamic finite-fuel constraint, in Steg (2012) in a capital accumulation game of sequential irreversible investment, and in Chiarolla et al.\ (2013) for an optimization problem involving N firms.
In Riedel and Su (2011) a stochastic first order conditions approach was employed to obtain the optimal policy through the solution to a backward stochastic equation.

For any admissible $\nu$ the super-gradient process $\nabla \hat{\mathcal{J}}(y,\nu)$ is defined as the unique optional\footnote{A stochastic process is optional if it is measurable with respect to the optional sigma-algebra $\mathcal{O}$ on $\Omega \times [0,T]$. The optional sigma-algebra $\mathcal{O}$ is generated by, e.g., the right-continuous and adapted processes (see, e.g., Dellacherie \& Meyer, 1982).} process satisfying
\beq
\label{superg}
\nabla \hat{\mathcal{J}}(y,\overline{\nu})(\tau):=\mathbb{E}\bigg[ \int_{\tau}^{\infty} \Gamma_y(t,y + \overline{\nu}_t)dt \Big|\mathcal{F}_{\tau}\bigg] - e^{-\beta \tau}P_{\tau},
\eeq
for any $\{\mathcal{F}_t\}$-stopping time $\tau \in [0,\infty]$, with $\beta:=r+\lambda + \varepsilon$ (cf.\ \eqref{beta}) and $\Gamma_y:=\partial \Gamma/\partial y$.
The super-gradient \eqref{superg} may be interpreted as the marginal expected net profit resulting from the purchase of an extra unit of commodity at time $\tau$.
\begin{theorem}
\label{thfoc}
Let Assumptions \ref{assr0} and \ref{assr} hold and fix $y\geq 0$. Then, for an admissible policy $\overline{\nu}^{*}$, the following first order conditions
\beq
\label{FOCs}
\left\{
\begin{array}{ll}
\displaystyle \nabla \hat{\mathcal{J}}(y,\overline{\nu}^*)(\tau)  \leq 0 \quad \ \ \ \ \ \ \ \ \ \ \ \textrm{a.s.\ for all stopping times} \ \tau \in [0,\infty], \\ \\
\displaystyle \int_{0}^{\infty} \nabla \hat{\mathcal{J}}(y,\overline{\nu}^*)(t) \ d\overline{\nu}^{*}_t = 0 \quad \ \textrm{a.s.}
\end{array}
\right.
\eeq
are necessary and sufficient for optimality of $\overline{\nu}^{*}$ in problem \eqref{W}.
\end{theorem}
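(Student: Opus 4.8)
The plan is to read \eqref{FOCs} as the infinite-dimensional, stochastic Kuhn--Tucker conditions for the concave maximization \eqref{W}, and to prove sufficiency and necessity by perturbation arguments built on one ingredient: the (one-sided) directional derivative of $\hat{\mathcal{J}}(y,\cdot)$. Since $\overline{\nu}\mapsto Y^{y,\overline{\nu}}$ is affine (cf.\ \eqref{inventorydyn2}) and $\Gamma(t,\cdot)$ is concave (Lemma \ref{propertiesPi}), for $\overline{\nu},\overline{\mu}\in\mathcal{S}$ the map $\theta\mapsto\hat{\mathcal{J}}(y,\overline{\nu}+\theta(\overline{\mu}-\overline{\nu}))$ is concave on $[0,1]$ (note $\mathcal{S}$ is convex); its difference quotients are therefore monotone in $\theta$ and, letting $\theta\downarrow 0$,
\[
\hat{\mathcal{J}}(y,\overline{\mu})-\hat{\mathcal{J}}(y,\overline{\nu})\ \le\ \mathbb{E}\bigg[\int_0^\infty\Gamma_y(t,y+\overline{\nu}_t)(\overline{\mu}_t-\overline{\nu}_t)\,dt-\int_{[0,\infty)}e^{-\beta t}P_t\,d(\overline{\mu}_t-\overline{\nu}_t)\bigg].
\]
Writing $\overline{\mu}_t-\overline{\nu}_t=\int_{[0,t)}d(\overline{\mu}-\overline{\nu})_s$, splitting the signed measure $d(\overline{\mu}-\overline{\nu})$ into its two nondecreasing parts (and $\Gamma_y$ into positive and negative parts), interchanging the order of integration (Tonelli together with Dellacherie \& Meyer, 1982, Ch.\ VI, Thm.\ 57, exactly as in the proof of Lemma \ref{lemmaFubini}) and conditioning on $\mathcal{F}_s$, the right-hand side rewrites as $\mathbb{E}\big[\int_{[0,\infty)}\nabla\hat{\mathcal{J}}(y,\overline{\nu})(s)\,d(\overline{\mu}-\overline{\nu})_s\big]$ with $\nabla\hat{\mathcal{J}}$ as in \eqref{superg}.

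For sufficiency, assume $\overline{\nu}^*$ satisfies \eqref{FOCs} and pick any $\overline{\mu}\in\mathcal{S}$. The inequality above with $\overline{\nu}=\overline{\nu}^*$ gives
\[
\hat{\mathcal{J}}(y,\overline{\mu})-\hat{\mathcal{J}}(y,\overline{\nu}^*)\ \le\ \mathbb{E}\Big[\int_{[0,\infty)}\nabla\hat{\mathcal{J}}(y,\overline{\nu}^*)(s)\,d\overline{\mu}_s\Big]-\mathbb{E}\Big[\int_{[0,\infty)}\nabla\hat{\mathcal{J}}(y,\overline{\nu}^*)(s)\,d\overline{\nu}^*_s\Big].
\]
The second expectation vanishes by the second condition in \eqref{FOCs}. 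For the first, the first condition in \eqref{FOCs} says that the optional process $s\mapsto\nabla\hat{\mathcal{J}}(y,\overline{\nu}^*)(s)$ is nonpositive at every stopping time, hence, by the optional section theorem, nonpositive outside an evanescent set, and in particular $d\overline{\mu}$-a.e.; thus the first expectation is $\le 0$ and $\hat{\mathcal{J}}(y,\overline{\mu})\le\hat{\mathcal{J}}(y,\overline{\nu}^*)$, i.e.\ $\overline{\nu}^*$ is optimal.

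For necessity, let $\overline{\nu}^*$ be optimal (it exists by Theorem \ref{existenceOC}). To obtain the first condition, fix a stopping time $\tau$ and a bounded, nonnegative, $\mathcal{F}_\tau$-measurable $\eta$, and take the admissible perturbation $\overline{\nu}^{*,\theta}:=\overline{\nu}^*+\theta\,\eta\,\mathds{1}_{(\tau,\infty)}$ (its admissibility follows from Lemmas \ref{lemma1}--\ref{lemma2}). From $\hat{\mathcal{J}}(y,\overline{\nu}^{*,\theta})\le\hat{\mathcal{J}}(y,\overline{\nu}^*)$, dividing by $\theta>0$ and letting $\theta\downarrow0$ (monotone difference quotients again), one gets $\mathbb{E}\big[\eta\,\nabla\hat{\mathcal{J}}(y,\overline{\nu}^*)(\tau)\big]\le 0$; taking $\eta=\mathds{1}_A$, $A\in\mathcal{F}_\tau$, yields $\nabla\hat{\mathcal{J}}(y,\overline{\nu}^*)(\tau)\le0$ a.s. To obtain the second condition, use the admissible perturbation $\overline{\nu}^{*,\theta}:=(1-\theta)\overline{\nu}^*$, $\theta\in[0,1]$: the concave map $\theta\mapsto\hat{\mathcal{J}}(y,(1-\theta)\overline{\nu}^*)$ attains its maximum on $[0,1]$ at $\theta=0$, so its right derivative there is $\le 0$, which after the same stochastic-Fubini rewriting reads $\mathbb{E}\big[\int_{[0,\infty)}\nabla\hat{\mathcal{J}}(y,\overline{\nu}^*)(s)\,d\overline{\nu}^*_s\big]\ge0$. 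Since, by the first condition just proved and the optional section theorem, $\nabla\hat{\mathcal{J}}(y,\overline{\nu}^*)\le0$ $d\overline{\nu}^*$-a.e., the integral $\int_{[0,\infty)}\nabla\hat{\mathcal{J}}(y,\overline{\nu}^*)(s)\,d\overline{\nu}^*_s$ is $\le0$ a.s.\ with nonnegative expectation, hence equal to $0$ a.s., which is the second condition in \eqref{FOCs}.

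I expect the delicate points to be the limit passages in the difference quotients --- i.e.\ differentiating $\hat{\mathcal{J}}$ under the expectation --- and the stochastic-Fubini step for the \emph{signed} integrator $d(\overline{\mu}-\overline{\nu})$. The former rests on monotonicity of the difference quotients (concavity of $\Gamma(t,\cdot)$) together with integrable dominating functions: the uniform bound $0\le H'\le\alpha+\alpha_p$ from \eqref{derivativeH} controls the revenue term, Assumption \ref{Assumptioncost} (finiteness of $\int_0^\infty e^{-rt}c'(ye^{-\varepsilon t})\,dt$) together with $\hat{\mathcal{J}}(y,\overline{\nu}^*)>-\infty$ controls the holding-cost term, and finiteness of $\mathbb{E}\big[\int_{[0,\infty)}e^{-\beta t}P_t\,d\overline{\nu}^*_t\big]$ (Lemma \ref{lemmaFubini}) and of $\mathbb{E}[e^{-\beta\tau}P_\tau]$ (supermartingale property, Lemma \ref{lemma1}) control the ordering term. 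The latter is handled by treating the two nondecreasing components of $\overline{\mu}-\overline{\nu}$ separately and invoking \eqref{Psupermartingale3} and Dellacherie \& Meyer as in Lemma \ref{lemmaFubini}; the recurring use of the optional section theorem is what turns the pointwise-in-$\tau$ inequality in \eqref{FOCs} into the $d\overline{\nu}$-a.e.\ inequality that links the two conditions.
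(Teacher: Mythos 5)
Your proof is correct in substance, but it follows a genuinely different route from the paper: the paper's proof of Theorem \ref{thfoc} is a two-line citation of Steg (2012), Proposition 3.2, with $k_t:=e^{-\beta t}P_t$, checking that Lemma \ref{propertiesPi} and Proposition \ref{properconcave} give the first two conditions of his Assumption 3.1 and remarking that strict monotonicity of $\Gamma_y$ can be weakened to concavity of $\Gamma(\omega,t,\cdot)$. You instead reconstruct that proposition's proof from first principles: the subgradient inequality coming from concavity of $\Gamma(t,\cdot)$ and affinity of the state in $\overline{\nu}$, the stochastic Fubini/optional projection step (Dellacherie--Meyer) turning the directional derivative into $\mathbb{E}\big[\int_{[0,\infty)}\nabla\hat{\mathcal{J}}(y,\overline{\nu}^*)(s)\,d(\overline{\mu}-\overline{\nu}^*)_s\big]$, sufficiency via the optional section theorem, and necessity via the two canonical perturbations $\overline{\nu}^*+\theta\eta\mathds{1}_{(\tau,\infty)}$ and $(1-\theta)\overline{\nu}^*$. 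This is exactly the Bank--Riedel/Steg argument specialized to the present $\Gamma$, so what you gain is a self-contained proof that makes visible where concavity (rather than strict concavity) and the integrability assumptions enter; what the paper's citation buys is brevity and delegation of the limit-interchange technicalities to Steg's Assumption 3.1. One point you state too quickly: in the necessity of the first condition, the monotone-convergence passage for the holding-cost increment needs a dominating bound on $c'\big(e^{-\varepsilon t}(y+\overline{\nu}^*_t+\theta\eta)\big)$ along the (possibly unbounded) optimal path, and Assumption \ref{Assumptioncost} only controls $c'$ along the deterministic paths $ye^{-\varepsilon t}$, while $\hat{\mathcal{J}}(y,\overline{\nu}^*)>-\infty$ controls $c$, not $c'$, evaluated at $y+\overline{\nu}^*_t$; making this step airtight requires a slightly more careful argument (note that the downward perturbation for the second condition is unproblematic, since there the cost increment has the favourable sign and the revenue increment is dominated by $\lambda(\alpha+\alpha_p)e^{-\beta t}P_t\overline{\nu}^*_t$, integrable by admissibility). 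This is precisely the kind of integrability hypothesis packaged inside Steg's Assumption 3.1, which the paper inherits by citation.
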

\begin{proof}
Set $k_t:=e^{-\beta t}P_{t}$ and apply Steg (2012), Proposition $3.2$ which requires the three conditions of his Assumption 3.1. Our
Lemma \ref{propertiesPi} and Proposition \ref{properconcave} guarantee the first two; as for the third one, it asks for $y \mapsto \Gamma_y(\omega,t,y)$ strictly decreasing
but it is easily checked that it suffices concavity of $\Gamma(\omega,t,\cdot)$ to prove optimality of the first order conditions.
\end{proof}

The intuition of the first order conditions is that when the gradient is positive at some stopping time, a small extra investment is profitable.
\begin{remark}
\label{heuristic}
The first order conditions \eqref{FOCs} can be thought of as a stochastic infinite-dimensional generalization of the classical Kuhn-Tucker conditions.
In real analysis, if one deals with an optimization problem of the form $\max_{x \geq 0}f(x)$,
for some smooth concave real function $f$, then it is well known that the Kuhn-Tucker conditions for optimality of $x^*$ are
\beq
\label{FOCrealanalysis}
f'(x^*) \leq 0, \qquad \qquad x^*f'(x^*) =0.
\eeq
Roughly speaking, in our setting the nonnegativity constraint $x \geq 0$ is replaced by the irreversibility constraint $d\overline{\nu}_t\geq 0$ for all $t \geq 0$, $\mathbb{Q}-$a.s., and the role of the first derivative $f'(\cdot)$ is played by the super-gradient process $\nabla \hat{\mathcal{J}}(y,\overline{\nu})$. Then \eqref{FOCs} is a generalization of \eqref{FOCrealanalysis}.
\end{remark}

The following proposition shows with the help of Theorem \ref{thfoc} that it is optimal not to exercise the investment option at all if $\beta-\delta$ is sufficiently large.

\begin{proposition}
\label{noinvestprop}
Under Assumptions \ref{assr0} and \ref{assr}, if $\beta-\delta \geq \lambda(\alpha_p + \alpha)$ then, for any $y\geq 0$, it is never optimal to invest in the commodity, i.e.\ $\overline{\nu}^*\equiv0$ for any $t\geq 0$ $\mathbb{Q}-$a.s. Hence
$$W(y)=\mathbb{E}\bigg[\int_{0}^{\infty} \Gamma(t, y) dt\bigg]= \int_{0}^{\infty} e^{-(r+\lambda) t}\Big[e^{\delta t} \lambda H(e^{-\varepsilon t}y) - c(e^{-\varepsilon t}y)\Big]dt.$$
\end{proposition}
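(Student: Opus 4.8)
The plan is to verify that $\overline{\nu}^* \equiv 0$ satisfies the two first order conditions \eqref{FOCs}, which by Theorem \ref{thfoc} are sufficient for optimality. Since the second condition in \eqref{FOCs} is trivially satisfied when $\overline{\nu}^* \equiv 0$ (the integral against $d\overline{\nu}^*$ vanishes), the whole task reduces to checking the first condition, namely that
\[
\nabla \hat{\mathcal{J}}(y,0)(\tau) = \mathbb{E}\bigg[ \int_{\tau}^{\infty} \Gamma_y(t,y)\,dt \,\Big|\,\mathcal{F}_{\tau}\bigg] - e^{-\beta \tau}P_{\tau} \;\leq\; 0 \qquad \text{a.s.\ for all stopping times }\tau.
\]
First I would compute $\Gamma_y(t,y)$ explicitly from \eqref{Pi}: one gets $\Gamma_y(t,y) = e^{-(r+\lambda)t}e^{-\varepsilon t}\big[\lambda P_t H'(e^{-\varepsilon t}y) - c'(e^{-\varepsilon t}y)\big]$. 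Since $c'\geq 0$ and, by \eqref{derivativeH}, $H'(\cdot) = \alpha_s + (\alpha+\alpha_p-\alpha_s)(1-F_D(\cdot)) \leq \alpha_s + (\alpha+\alpha_p - \alpha_s) = \alpha + \alpha_p$, we obtain the pointwise bound $\Gamma_y(t,y) \leq \lambda(\alpha+\alpha_p)\, e^{-\beta t}P_t$ (recalling $\beta = r+\lambda+\varepsilon$).

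The key step is then to estimate the conditional expectation using Lemma \ref{lemma2}. Integrating the bound above and taking conditional expectation given $\mathcal{F}_\tau$,
\[
\mathbb{E}\bigg[\int_\tau^\infty \Gamma_y(t,y)\,dt\,\Big|\,\mathcal{F}_\tau\bigg] \;\leq\; \lambda(\alpha+\alpha_p)\,\mathbb{E}\bigg[\int_\tau^\infty e^{-\beta t}P_t\,dt\,\Big|\,\mathcal{F}_\tau\bigg] \;=\; \frac{\lambda(\alpha+\alpha_p)}{\beta-\delta}\,e^{-\beta\tau}P_\tau,
\]
where the last equality is exactly \eqref{Psupermartingale3}. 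Substituting into the expression for the super-gradient gives
\[
\nabla\hat{\mathcal{J}}(y,0)(\tau) \;\leq\; \bigg(\frac{\lambda(\alpha+\alpha_p)}{\beta-\delta} - 1\bigg)e^{-\beta\tau}P_\tau,
\]
and the hypothesis $\beta - \delta \geq \lambda(\alpha_p+\alpha)$ makes the prefactor nonpositive, so $\nabla\hat{\mathcal{J}}(y,0)(\tau)\leq 0$ a.s. Thus both conditions in \eqref{FOCs} hold and Theorem \ref{thfoc} yields optimality of $\overline{\nu}^*\equiv 0$. The formula for $W(y)$ then follows by evaluating $\hat{\mathcal{J}}(y,0)$ directly from \eqref{jhat} and \eqref{Pi}, since the $d\overline{\nu}^*$ term drops out.

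I do not anticipate a serious obstacle here; the only point requiring a little care is making sure the interchange of integration and conditional expectation (and the use of \eqref{Psupermartingale3} with $\tau$ a general stopping time valued in $[0,\infty]$, including the convention $e^{-\beta\tau}P_\tau = 0$ on $\{\tau=\infty\}$ from Lemma \ref{lemma1}) is justified — but this is covered by Tonelli's theorem together with the integrability established in Lemma \ref{lemma2} and Assumption \ref{assr0}. One should also note that the case $\tau = \infty$ is handled trivially since both sides of the inequality are zero there.
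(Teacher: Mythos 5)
Your proof is correct and follows essentially the paper's own argument: the same estimates $H'\le \alpha+\alpha_p$ and $c'\ge 0$, giving $\Gamma_y(t,y)\le \lambda(\alpha+\alpha_p)e^{-\beta t}P_t$, combined with \eqref{Psupermartingale3} and Theorem \ref{thfoc}, are exactly what the paper uses. The only (minor) difference is that you check the sufficient first order conditions at $\overline{\nu}^*\equiv 0$, whereas the paper bounds the super-gradient along an arbitrary admissible policy and obtains strict negativity, thereby also excluding, via the necessity part of Theorem \ref{thfoc}, any other optimal policy that invests.
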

\begin{proof}
For any admissible procurement policy $\overline{\nu}$ and any stopping time $\tau \geq 0$ we have
\begin{eqnarray}
\label{nablazeroinvest}
& & \displaystyle \nabla \hat{\mathcal{J}}(y,\overline{\nu})=\mathbb{E}\bigg[ \int_{\tau}^{\infty} \Gamma_y(t,y + \overline{\nu}_t)dt \Big|\mathcal{F}_{\tau}\bigg] - e^{-\beta \tau}P_{\tau} \nonumber \\
& & = \lambda (\alpha_p + \alpha) \mathbb{E}\bigg[ \int_{\tau}^{\infty} e^{-\beta t} P_t dt \Big|\mathcal{F}_{\tau}\bigg] - \lambda(\alpha +\alpha_p-\alpha_s)\mathbb{E}\bigg[ \int_{\tau}^{\infty} e^{-\beta t}P_t F_D(e^{-\varepsilon t}(y + \overline{\nu}_t)) dt\Big|\mathcal{F}_{\tau}\bigg] \nonumber \\
& & \ \ \ - \mathbb{E}\bigg[ \int_{\tau}^{\infty} e^{-\beta t} c'(e^{-\varepsilon t}(y + \overline{\nu}_t)) dt\Big|\mathcal{F}_{\tau}\bigg] - e^{-\beta \tau}P_{\tau} \\
& & < \lambda (\alpha_p + \alpha)\mathbb{E}\bigg[ \int_{\tau}^{\infty} e^{-\beta t} P_t dt \Big|\mathcal{F}_{\tau}\bigg] - e^{-\beta \tau}P_{\tau} \ =  \ e^{-\beta \tau}P_{\tau}\Big(\frac{\lambda (\alpha_p + \alpha)- (\beta-\delta)}{\beta-\delta}\Big), \nonumber
\end{eqnarray}
with $\beta:=r+\varepsilon + \lambda$, where the inequality in the third step follows from the nonnegativity of $\alpha +\alpha_p-\alpha_s$, of $F_D(\cdot)$ and of $c'(\cdot)$ (cf.\ Assumption \ref{Assumptioncost}), whereas the last equality follows from \eqref{Psupermartingale3}. It follows that if $\lambda (\alpha_p + \alpha) \leq \beta-\delta$, then the super-gradient is always strictly negative for any admissible procurement policy $\overline{\nu}$ and any stopping time $\tau \geq 0$, and hence it is never optimal to purchase commodity (cf. Theorem \ref{thfoc}).
\end{proof}
Theorem \ref{existenceOC} guarantees the existence of an optimal investment policy for any $y \geq 0$, and
the first order conditions \eqref{FOCs} completely characterize it. However, such conditions are not necessarily binding and hence they do not always explicitly determine the optimal policies.
For that, we follow Riedel and Su (2011), Section 3, or Steg (2012), Proposition 3.3, and we obtain the optimal procurement policies in terms of a process\footnote{Notice that in Riedel and Su (2011), Section 3, or Steg (2012), Section 3.2, such process is referred to as \textsl{base capacity} process.} $l^*$, which we refer to as \textsl{base inventory} process, representing a desirable value of inventory that the firm aims to reach at every time.
Recall that $\beta:=r+\lambda +\varepsilon$.
\begin{theorem}
\label{bsecapacitythm}
Assume that Assumptions \ref{assr0} and \ref{assr} hold. Then there exists an optional process $\ell^{*}$ taking values in $[0,\infty)$ and satisfying, for $t \geq 0$,
\beq
\label{basecapacitydef}
\ell^{*}_t=\sup\Big\{ z \in \mathbb{R}_{+} \Big | \essinf_{\tau \geq t }\mathbb{E}\Big[\int_t^{\tau}\Gamma_y(s, ze^{-\epsilon s}) ds + e^{-\beta \tau}P_{\tau} \Big| \mathcal{F}_{t}\Big] = e^{-\beta t}P_{t}\Big\} \vee 0, \  \ \mathbb{Q}\textrm{-a.s.,}
\eeq
such that, for any $y \geq 0$, the procurement policy
\beq
\label{optimalprocurement}
\overline{\nu}^{*}_t := \sup_{0 \leq u < t}\Big(e^{\varepsilon u}\ell^*_u - y\Big) \vee 0, \qquad t \geq 0,
\eeq
is optimal for \eqref{W}, if it is admissible.
\end{theorem}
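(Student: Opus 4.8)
The plan is to show that the process $\ell^*$ defined in \eqref{basecapacitydef} exists as a well-defined optional process and that the candidate policy \eqref{optimalprocurement} satisfies the first order conditions \eqref{FOCs} of Theorem \ref{thfoc}, which are necessary and sufficient for optimality in \eqref{W}. The underlying machinery is the representation theory for optional processes developed by Bank and El Karoui (2004), applied as in Riedel and Su (2011), Section 3, or Steg (2012), Proposition 3.3.

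First I would set up the representation problem. Define the optional process $k_t := e^{-\beta t} P_t$ (which by Lemma \ref{lemma1} is a positive supermartingale vanishing at infinity) and, for each fixed initial inventory, consider the marginal reward rate $t \mapsto \Gamma_y(t, \cdot)$. The key point is that $\Gamma_y(t,y)$ is strictly decreasing and continuous in $y$ (from Lemma \ref{propertiesPi}, since $H$ is strictly concave off the support endpoints and $c$ is convex; strict monotonicity in the relevant range follows from $H'' = -(\alpha+\alpha_p-\alpha_s)f_D < 0$ on the support of $D$, combined with convexity of $c$), with the right integrability so that $\mathbb{E}[\int_\tau^\infty \Gamma_y(t, z e^{-\varepsilon t})\,dt \mid \mathcal{F}_\tau]$ is finite for every $z \ge 0$ and every stopping time $\tau$ — this uses \eqref{assumption2.3}, the bound \eqref{boundsH} on $H'$, and Lemma \ref{lemma2}. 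Then the Bank–El Karoui representation theorem provides a (unique up to optional sections) process $\ell^*$ such that the backward equation
\beq
\label{planBEK}
\mathbb{E}\bigg[\int_\tau^\infty \Gamma_y\Big(t, e^{-\varepsilon t}\sup_{\tau \le u < t} e^{\varepsilon u}\ell^*_u\Big)\,dt \,\Big|\, \mathcal{F}_\tau\bigg] = e^{-\beta \tau} P_\tau
\eeq
holds for all stopping times $\tau$; the explicit formula \eqref{basecapacitydef} is the standard ``level-crossing'' representation of $\ell^*_t$ in terms of an infimum over stopping times $\tau \ge t$ of $\mathbb{E}[\int_t^\tau \Gamma_y(s, z e^{-\varepsilon s})\,ds + e^{-\beta \tau} P_\tau \mid \mathcal{F}_t]$, reorganized via integration by parts (using that $\Gamma_y$ is decreasing in its second argument, so the supremum running maximum of $\ell^*$ tracks exactly the crossing times). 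I would verify that $\ell^*$ takes values in $[0,\infty)$: the lower bound $0$ is built into the $\vee 0$, and finiteness follows because for $z$ large the marginal cost term in $\Gamma_y$ dominates, making the bracketed expectation strictly less than $e^{-\beta t}P_t$, so the supremum is finite.

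Next I would check that $\overline{\nu}^*$ from \eqref{optimalprocurement} is admissible when the hypothesis says so, i.e. $\overline{\nu}^* \in \mathcal{S}$: it is by construction nondecreasing, left-continuous (as a running supremum of left-continuous-in-$u$ quantities, with the open interval $0 \le u < t$), $\{\mathcal{F}_t\}$-adapted since $\ell^*$ is optional, and $\overline{\nu}^*_0 = 0$ because the supremum over an empty index set is $0$; the integrability $\mathbb{E}[\int_0^\infty e^{-\beta t} P_t \overline{\nu}^*_t\,dt] < \infty$ is exactly the admissibility clause we are assuming (this is why the statement carries the caveat ``if it is admissible''). Then I would establish the first order conditions. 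Writing $Y^{y,\overline{\nu}^*}_t = e^{-\varepsilon t}(y + \overline{\nu}^*_t)$, the crucial algebraic identity is that on the set $\{y + \overline{\nu}^*_t > 0\}$ one has $e^{-\varepsilon t}(y + \overline{\nu}^*_t) = e^{-\varepsilon t}\big(y \vee \sup_{0 \le u < t} e^{\varepsilon u}\ell^*_u\big)$, and wherever $\overline{\nu}^*$ is actually increasing (i.e. $d\overline{\nu}^*_t > 0$) one has $e^{-\varepsilon t}(y+\overline{\nu}^*_t) = \ell^*_t$. Plugging into \eqref{planBEK} and comparing with the definition \eqref{superg} of the super-gradient, the representation equation gives $\nabla \hat{\mathcal{J}}(y,\overline{\nu}^*)(\tau) \le 0$ for every stopping time $\tau$ (the inequality rather than equality comes from the $\sup$/running-max: when $y + \overline{\nu}^*_\tau > e^{\varepsilon\tau}\ell^*_\tau$, monotonicity of $\Gamma_y$ in $y$ and the defining property of $\ell^*$ push the conditional expectation below $e^{-\beta\tau}P_\tau$), which is the first condition in \eqref{FOCs}. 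For the complementarity (second) condition, I would argue that $d\overline{\nu}^*$ is carried by the set $\{t : Y^{y,\overline{\nu}^*}_t = \ell^*_t e^{\cdot}\}$ — more precisely the running-maximum structure means $\overline{\nu}^*$ increases only at times where $e^{\varepsilon t}\ell^*_t$ attains its past running supremum — and at such times $\nabla \hat{\mathcal{J}}(y,\overline{\nu}^*)(t) = 0$ by the equality case of the representation, so $\int_0^\infty \nabla \hat{\mathcal{J}}(y,\overline{\nu}^*)(t)\,d\overline{\nu}^*_t = 0$.

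The main obstacle I anticipate is the careful bookkeeping in applying the Bank–El Karoui representation theorem: one must verify all of its regularity/integrability hypotheses for the random field $(t,z) \mapsto \Gamma_y(t, z e^{-\varepsilon t})$ — measurability, lower-semicontinuity and strict monotonicity in $z$, the ``no upward jumps'' / class (D)-type conditions on $k_t = e^{-\beta t}P_t$, and uniform integrability of the family $\{\int_\tau^\infty \Gamma_y(t, z e^{-\varepsilon t})\,dt\}$ — and then translate the abstract representation into the concrete running-supremum formula \eqref{optimalprocurement} and the explicit level-set formula \eqref{basecapacitydef}. A secondary technical point is handling the possibly-atomic part of $\overline{\nu}^*$ (an initial jump at $t=0+$ if $y < \ell^*_{0+}$) and the deterioration factor $e^{\varepsilon u}$ inside the supremum, which slightly complicates the identity $Y^{y,\overline{\nu}^*}_t = \ell^*_t$ on the support of $d\overline{\nu}^*$; this is routine but must be done with care because $\Gamma_y$ is evaluated at the \emph{deterioration-adjusted} inventory. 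Once the representation is in hand, checking \eqref{FOCs} is essentially a direct substitution, and optimality then follows immediately from the sufficiency part of Theorem \ref{thfoc}.
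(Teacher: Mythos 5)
Your plan hinges on producing $\ell^*$ as the solution of the backward equation \eqref{backwardgeneric} via the Bank--El Karoui representation theorem, and only then reading off \eqref{basecapacitydef} and the first order conditions. This is where the argument breaks down: the representation theorem requires the random field $z \mapsto \Gamma_y(t,z e^{-\varepsilon t})$ to be continuous and \emph{strictly} decreasing with the Inada-type range conditions (diverging to $+\infty$ as $z \downarrow 0$ and to $-\infty$ as $z \uparrow \infty$), and these fail here. Indeed $\Gamma_y(t,y)=e^{-\beta t}\bigl[\lambda P_t\bigl(\alpha_s+(\alpha+\alpha_p-\alpha_s)(1-F_D(e^{-\varepsilon t}y))\bigr)-c'(e^{-\varepsilon t}y)\bigr]$ stays bounded as $y \downarrow 0$, need not tend to $-\infty$ as $y \uparrow \infty$ (with linear costs it tends to $e^{-\beta t}(\lambda\alpha_s P_t-c)$, which can be positive), and is not even strictly decreasing off the support of $D$ when $c$ is linear. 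The paper is explicit about this obstruction: it states that its $\Gamma$ does \emph{not} satisfy the Inada conditions, which is precisely why the backward-equation characterization is only given \emph{conditionally} (Proposition \ref{baseandoptimalcontrol}: ``if there exists a process $\ell^*$ solving \eqref{backwardgeneric}\ldots'') and is solved only in the special linear-cost/exponential-demand case of Section \ref{examples}. So your existence step is unjustified in the generality of Theorem \ref{bsecapacitythm}, and with it the equality \eqref{planBEK} that your verification of both first order conditions rests on; your fallback claim that strict monotonicity follows from $H''<0$ on the support of $D$ does not repair this.

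The paper's own proof takes a different and shorter route that avoids the representation theorem altogether: it combines the existence of an optimal policy already obtained by Koml\'os-type compactness (Theorem \ref{existenceOC}) with Steg (2012), Proposition 3.3 (adapted to the deterioration rate $\varepsilon\geq 0$), where the base inventory is \emph{defined} directly through the family of optimal stopping problems in \eqref{basecapacitydef} -- no backward equation, no Inada conditions, and only concavity (not strict monotonicity) of $\Gamma(t,\cdot)$ is needed, consistently with the remark in the proof of Theorem \ref{thfoc}. If you want to keep your structure, you would have to either verify Steg's hypotheses and invoke his construction of $\ell^*$ from the stopping problems, or restrict to a setting (strictly convex $c$ with $c'(\infty)=\infty$, $f_D>0$ everywhere, etc.) in which a genuine Bank--El Karoui argument can be carried out; as written, the existence of the process $\ell^*$ in \eqref{basecapacitydef} and the passage from the level-set formula to the equality in \eqref{planBEK} are both gaps.
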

\begin{proof}
Since Theorem \ref{existenceOC} provides the existence of an optimal procurement policy ${\nu}^{*}$ for any $y \geq 0$, it suffices to apply Steg (2012), Proposition 3.3 (which may be easily adapted to our case with depreciation rate $\varepsilon \geq 0$).
\end{proof}
\noindent The base inventory process $\ell^*$ is the maximal inventory level for which it is not profitable to delay marginal purchase of the commodity to any future stopping time.
Also, for any $t\geq 0$, $\ell^*_t$ is uniquely defined if $y\mapsto\Gamma_y(t,y)$ is strictly decreasing, i.e.\ if the holding cost function $c(\cdot)$ is strictly convex (cf.\ \eqref{Pi}).
The optimal procurement policy provided by \eqref{optimalprocurement} consists in keeping the inventory level $Y^{y,\overline{\nu}^*}$ always at or above $\ell^*_t$. If the inventory level at time $t$ is such that $Y^{y,\overline{\nu}^*}_t > \ell^*_t$, then the firm faces excess inventory and should wait to buy more commodity. If the inventory level is below $\ell^*_t$, then the firm should invest $\overline{\nu}^*_t = \ell^*_t - Y^{y,\overline{\nu}^*}_t$ in order to reach the level $\ell^*_t$. Such property of the optimal policy is quite natural in inventory theory (see, e.g., Porteus, 1990).

The signal process $\ell^*_t$ may be characterized as the unique optional positive solution of a backward stochastic equation related to \eqref{FOCs} (cf.\ the Bank-El Karoui's representation problem in Bank \& El Karoui, 2004, Theorem 1 and Theorem 3). In the context of stochastic irreversible capacity expansion problems, such backward equation has been obtained in Riedel and Su (2011), Chiarolla and Ferrari (2014), Ferrari (2015) (here the backward equation is actually an integral equation) under the Inada conditions on the corresponding $\Gamma$  (cf.\ Inada, 1963), i.e.
$$\lim_{y\downarrow 0}\Gamma_y(t,y)=\infty \quad \text{and} \quad \lim_{y\uparrow \infty}\Gamma_y(t,y) =0 \ \ \ \ \ \ \ \textrm{for} \ t\geq 0.$$
In the present setting, although our $\Gamma$ (cf.\ \eqref{Pi}) does not satisfy the Inada conditions, we adapt some of their arguments to characterize $\ell^*_t$ through a backward stochastic equation, which we actually manage to solve in the case of linear holdings costs and exponentially distributed demand (cf. Section \ref{OptimalProcPolicy} below).

\begin{proposition}
\label{baseandoptimalcontrol}
If there exists a progressively measurable process $\ell^*$ solving the backward stochastic equation
\beq
\label{backwardgeneric}
\mathbb{E}\bigg[\int_{\tau}^{\infty} \Gamma_y(t,\sup_{\tau \leq u < t} (e^{\varepsilon u}\ell^*_u)) dt \Big|\mathcal{F}_{\tau}\bigg] = e^{-\beta \tau}P_{\tau}, \ \ \ \ \textrm{a.s.\ for any stopping time} \ \tau \in [0,\infty],
\eeq
then the procurement policy
\beq
\label{optimalcontrolgeneric}
\overline{\nu}^*_t:= \sup_{0 \leq u < t}(e^{\varepsilon u}\ell^*_u -y) \vee 0, \qquad  \qquad \overline{\nu}^*_0=0,
\eeq
is optimal for problem \eqref{W}, if it is admissible.
\end{proposition}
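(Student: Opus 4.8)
The plan is to verify that the candidate policy $\overline{\nu}^{*}$ of \eqref{optimalcontrolgeneric} satisfies the two first order conditions \eqref{FOCs} of Theorem \ref{thfoc}; since $\overline{\nu}^{*}$ is assumed admissible and \eqref{FOCs} are sufficient for optimality in \eqref{W}, this concludes the proof. The algebraic backbone, immediate from the fact that $\overline{\nu}^{*}$ is a running supremum, is that for every stopping time $\tau$ and every $t\ge\tau$
\[
y+\overline{\nu}^{*}_t=\big(y+\overline{\nu}^{*}_{\tau}\big)\vee\sup_{\tau\le u<t}\big(e^{\varepsilon u}\ell^{*}_u\big),
\]
and in particular $y+\overline{\nu}^{*}_t\ge\sup_{\tau\le u<t}(e^{\varepsilon u}\ell^{*}_u)$ for all $t\ge\tau$.

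First I would establish the inequality in \eqref{FOCs}. Fix a stopping time $\tau\in[0,\infty]$. By Lemma \ref{propertiesPi} the map $y\mapsto\Gamma_y(t,y)$ is nonincreasing, so the bound above yields $\Gamma_y(t,y+\overline{\nu}^{*}_t)\le\Gamma_y\big(t,\sup_{\tau\le u<t}(e^{\varepsilon u}\ell^{*}_u)\big)$ for $t\ge\tau$; integrating over $[\tau,\infty)$, taking $\mathbb{E}[\,\cdot\,|\mathcal{F}_{\tau}]$ and invoking the backward equation \eqref{backwardgeneric} gives
\[
\mathbb{E}\Big[\int_{\tau}^{\infty}\Gamma_y(t,y+\overline{\nu}^{*}_t)\,dt\,\Big|\,\mathcal{F}_{\tau}\Big]\le\mathbb{E}\Big[\int_{\tau}^{\infty}\Gamma_y\big(t,\textstyle\sup_{\tau\le u<t}(e^{\varepsilon u}\ell^{*}_u)\big)\,dt\,\Big|\,\mathcal{F}_{\tau}\Big]=e^{-\beta\tau}P_{\tau},
\]
which by the definition \eqref{superg} of the super-gradient is exactly $\nabla\hat{\mathcal{J}}(y,\overline{\nu}^{*})(\tau)\le0$ (the case $\tau=\infty$ being trivial under the standing conventions on $e^{-\beta\tau}P_{\tau}$). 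For the second, ``flat-off'', condition I would use that the Stieltjes measure $d\overline{\nu}^{*}$ is carried by the set of times $t$ at which the running supremum defining $\overline{\nu}^{*}$ is attained, i.e.\ (up to the usual left-/right-limit bookkeeping for a running-maximum process) by $\{t:\ y+\overline{\nu}^{*}_t=e^{\varepsilon t}\ell^{*}_t\}$. At such a time, $\sup_{t\le u<s}(e^{\varepsilon u}\ell^{*}_u)\ge e^{\varepsilon t}\ell^{*}_t=y+\overline{\nu}^{*}_t$ for $s>t$, so the identity above forces $y+\overline{\nu}^{*}_s=\sup_{t\le u<s}(e^{\varepsilon u}\ell^{*}_u)$ for a.e.\ $s>t$; substituting this into \eqref{superg} and applying \eqref{backwardgeneric} once more gives $\nabla\hat{\mathcal{J}}(y,\overline{\nu}^{*})(t)=0$ on the support of $d\overline{\nu}^{*}$. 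Since $\nabla\hat{\mathcal{J}}(y,\overline{\nu}^{*})\le0$ everywhere and $d\overline{\nu}^{*}\ge0$, the integral in \eqref{FOCs} vanishes, and both first order conditions hold.

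The delicate point — and the step I would spend most care on — is the last one: turning the heuristic ``$d\overline{\nu}^{*}$ is supported where the Skorohod reflection is active'' into a rigorous statement, together with the fact that \eqref{backwardgeneric} is only postulated to hold a.s.\ for each fixed stopping time, so evaluating it along the \emph{random} times that carry $d\overline{\nu}^{*}$ requires an optional (predictable) section argument; the jump times of $\overline{\nu}^{*}$ need a separate but entirely analogous treatment. Alternatively one can bypass these measure-theoretic technicalities by invoking the verification through the associated Skorohod reflection as in Steg (2012), Proposition 3.3, and Riedel \& Su (2011), Section 3. Everything else is routine, the integrability needed for the conditional expectations above being guaranteed by the assumed admissibility of $\overline{\nu}^{*}$.
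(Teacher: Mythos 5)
Your proposal is correct and follows essentially the same route as the paper's proof: verify the first condition in \eqref{FOCs} by combining the inequality $y+\overline{\nu}^{*}_t\ge\sup_{\tau\le u<t}(e^{\varepsilon u}\ell^{*}_u)$ with the monotonicity of $\Gamma_y(t,\cdot)$ (concavity of $\Gamma$) and the backward equation \eqref{backwardgeneric}, and obtain the flat-off condition by noting that on the support of $d\overline{\nu}^{*}$ the controlled inventory coincides with the running supremum of the base level, so that \eqref{backwardgeneric} makes the super-gradient vanish there. The measure-theoretic subtlety you flag about evaluating the supergradient along the random times carrying $d\overline{\nu}^{*}$ is handled in the paper exactly as informally as in your write-up, with the rigor deferred to Bank and Riedel (2001) and Riedel and Su (2011).
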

\begin{proof}
We borrow arguments from Bank and Riedel (2001) (or Riedel \& Su, 2011). It suffices to show that the process
\beq
\label{optimalinventorygeneric}
Y^*_t:= Y^{y,\overline{\nu}^*}_t = e^{-\varepsilon t}[y + \overline{\nu}^*_t] = ye^{-\varepsilon t} \vee \sup_{0 \leq u < t}(e^{-\varepsilon(t-u)}\ell^*_u)
\eeq
satisfies the first-order conditions for optimality \eqref{FOCs}.
For any stopping time $\tau$, \eqref{superg} together with concavity of $\Gamma(t,\cdot)$ imply
\begin{eqnarray}
\label{checkoptimality}
\nabla \hat{\mathcal{J}}(y,\overline{\nu}^*)(\tau) \hspace{-0.25cm} & = & \hspace{-0.25cm} \mathbb{E}\bigg[\int_{\tau}^{\infty} \Gamma_y(t, y \vee \sup_{0 \leq u < t}(e^{\varepsilon u}\ell^*_u)) dt \Big | \ \mathcal{F}_{\tau}\bigg] -e^{-\beta \tau}P_{\tau}  \\
\hspace{-0.25cm} & \leq & \hspace{-0.25cm} \mathbb{E}\bigg[\int_{\tau}^{\infty} \Gamma_y(t, \sup_{\tau \leq u < t}(e^{\varepsilon u}\ell^*_u))\Big| \mathcal{F}_{\tau}\bigg]- e^{-\beta \tau}P_{\tau} = 0, \nonumber
\end{eqnarray}
since $\ell^*$ solves \eqref{backwardgeneric}. \\
\indent To prove the second condition in \eqref{FOCs}, notice that for any $\tau \geq 0$ at which the purchase of the commodity takes place (i.e.\ where $d\overline{\nu}^*_{\tau}:=\overline{\nu}^*_{\tau+\varepsilon} - \overline{\nu}^*_{\tau} >0$, for every $\varepsilon > 0$) we have
$$
Y^*_t = e^{-\varepsilon t} \sup_{\tau \leq u < t}(e^{\varepsilon u}\ell^*_u) \ \ \ \ \ \textrm{for} \ \ t > \tau,
$$
by \eqref{optimalinventorygeneric}. Therefore $y + \overline{\nu}^*_t=\sup_{\tau \leq u < t}(e^{\varepsilon u}\ell^*_u)$ for $t>\tau$, hence at times $\tau$ carrying the random Borel measure $d\nu^*$ we have $\nabla \hat{\mathcal{J}}(y,\overline{\nu}^*)(\tau)=0$ by \eqref{backwardgeneric}.
\end{proof}
\begin{remark}
Clearly $\overline{\nu}^*$ of \eqref{optimalcontrolgeneric} is nondecreasing and left-continuous; it is $\{\mathcal{F}_t\}$-progressively measurable (since $l^*$ is so, cf.\ Dellacherie $\&$ Meyer, 1982, Theorem IV.33), hence
also $\{\mathcal{F}_t\}$-adapted. Therefore $\overline{\nu}^*$ is admissible if and only if the integrability condition $\mathbb{E}\big[\int_{0}^{\infty}e^{- \beta t} P_t \overline{\nu}^*_t dt \big]< \infty$ holds. Such condition must be checked on a case by case basis but it is usually satisfied if $\beta$ is sufficiently large.
\end{remark}
Notice that, as discussed in Section 3 of Riedel and Su (2011), \eqref{backwardgeneric} shows clear similarities with the first order conditions \eqref{FOCs}. In fact, by considering the first order conditions for a firm that starts investing at time $\tau$, we may take the supremum from time $\tau$ on in the inventory that tracks the base inventory $\ell^*$ (cf.\ \eqref{optimalprocurement}) and then plug it into the supergradient \eqref{superg}.
Then the first order conditions are binding for the firm and thus we obtain an equality as in \eqref{backwardgeneric}.

In applications Proposition \ref{baseandoptimalcontrol} turns out to be very useful as it provides a constructive method to find $\ell^*$  and hence solve problem \eqref{W}. In fact, \eqref{backwardgeneric} may be at least found numerically by backward induction on a discretized version of problem \eqref{backwardgeneric} (see Bank \& F\"ollmer, 2002, Section 4). In the next section we explicitly solve the backward stochastic equation \eqref{backwardgeneric} in the case of an exponentially distributed demand (cf.\ Lariviere \& Porteus, 1990; Zhang, Nagarajan, \& So\v{s}i\'c, 2009) for the use of the exponential distribution in inventory management literature) and linear holding costs (cf.\ Guo et al.,\ 2011; Tarima \& Kingsman, 2004; Zhang, 2010, among others).


\section{Explicit Results: Linear Holding Costs}
\label{examples}

Throughout this section Assumptions \ref{TD}, \ref{Assumptioncost}, \ref{assr0} and \ref{assr} still hold true. We also assume, as in Guo et al.\ (2011), linear holding costs, i.e. $c(x)=c x$ for some $c>0$, and zero deterioration rate, i.e. $\varepsilon=0$. Moreover, we assume that $D$ is exponentially distributed with parameter $\gamma>0$; that is, $f_D(z) = \gamma e^{-\gamma z}$.
Within this setting one has $\beta=r+\lambda$ and (cf.\ \eqref{H})
\beq
\label{Hlin}
H(y) = \alpha_s y + \frac{\alpha}{\gamma} - \frac{(\alpha_p + \alpha - \alpha_s)}{\gamma}e^{-\gamma y},
\eeq
and (cf.\ \eqref{Pi})
\beq
\label{Pilin}
\Gamma(t,y)=e^{-\beta t}\lambda P_t\Big[\alpha_s y + \frac{\alpha}{\gamma} - \frac{(\alpha_p + \alpha - \alpha_s)}{\gamma}e^{-\gamma y}\Big] - e^{-\beta t} c y .
\eeq
Then,
\beq
\label{Piderivative}
\Gamma_y(t,y) = e^{-\beta t}\Big[\lambda P_t\Big(\alpha_s + (\alpha + \alpha_p -\alpha_s)e^{-\gamma y}\Big) - c\Big].
\eeq

\subsection{The Optimal Procurement Policy}
\label{OptimalProcPolicy}

We now find, in our general exponential L\'evy setting, the explicit form of the optimal procurement policy, which turns out to be bounded. To the best of our knowledge, such explicit result appears here for the first time.

\begin{proposition}
\label{backlinearcostsprop}
With $\beta=r + \lambda$, let $\tau_{\beta}$ be an exponentially distributed random time, independent of $P$, with parameter $\beta$, and set
\beq
\label{kappa}
\kappa:=\mathbb{E}\Big[\inf_{0 \leq u \leq \tau_{\beta}}\Big(\frac{P_{\tau_{\beta}}}{P_{u}}\Big)\Big],
\eeq
\beq
\label{ab}
a:=\frac{(\beta-\delta-\lambda\alpha_s)}{\lambda(\alpha + \alpha_p -\alpha_s)}, \qquad \quad b:=\frac{c}{\lambda\kappa(\alpha + \alpha_p -\alpha_s)},
\eeq
\beq
\label{baselinear}
\ell^*_t = -\frac{1}{\gamma}\ln\Big( a + \frac{b}{P_t}\Big).
\eeq
Then $\ell^*_t \ $\footnote{The logarithm in \eqref{baselinear} is well defined since $a:=\frac{(\beta-\delta-\lambda\alpha_s)}{\lambda(\alpha + \alpha_p -\alpha_s)}>0$, by Assumption \ref{assr} and by the nonnegativity of $\alpha +\alpha_p-\alpha_s$, and $b>0$ since $\kappa \in [0,1]$.} solves the stochastic backward equation
\beq
\label{backlinear}
\mathbb{E}\bigg[\int_{\tau}^{\infty} \Gamma_y(t,\sup_{\tau \leq u \leq t}\ell^*_u) dt \Big|\mathcal{F}_{\tau}\bigg] = e^{-\beta \tau}P_{\tau}, \qquad \text{a.s.\ for any} \ \tau \geq 0,
\eeq
and hence the optimal procurement policy for problem \eqref{W} is
\beq
\label{optimalcontrol}
\overline{\nu}^*_t:= \sup_{0 \leq s < t}(\ell^*_s-y) \vee 0, \qquad \overline{\nu}^*_0=0.
\eeq

\end{proposition}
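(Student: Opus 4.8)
The plan is to verify by direct computation that the process $\ell^{*}$ of \eqref{baselinear} solves the backward stochastic equation \eqref{backlinear}, and then to invoke Proposition \ref{baseandoptimalcontrol} (with $\varepsilon=0$) after checking that the associated policy \eqref{optimalcontrol} is admissible. Since $z\mapsto-\gamma^{-1}\ln z$ is strictly decreasing and $e^{-\gamma\ell^{*}_u}=a+b/P_u$, the running supremum telescopes:
\[
e^{-\gamma\sup_{\tau\le u\le t}\ell^{*}_u}=\inf_{\tau\le u\le t}\Bigl(a+\frac{b}{P_u}\Bigr)=a+b\,\inf_{\tau\le u\le t}\frac{1}{P_u}.
\]
Substituting this into the expression \eqref{Piderivative} for $\Gamma_y$ and using the defining relations \eqref{ab}, namely $\lambda(\alpha+\alpha_p-\alpha_s)\,a=\beta-\delta-\lambda\alpha_s$ and $\lambda(\alpha+\alpha_p-\alpha_s)\,b=c/\kappa$, the $\alpha_s$-term combines with the $a$-term to give $(\beta-\delta)P_t$, so that
\[
\Gamma_y\Bigl(t,\sup_{\tau\le u\le t}\ell^{*}_u\Bigr)=e^{-\beta t}\Bigl[(\beta-\delta)\,P_t+\frac{c}{\kappa}\,\inf_{\tau\le u\le t}\frac{P_t}{P_u}-c\Bigr].
\]

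Next I would integrate over $t\in[\tau,\infty)$ and take $\mathbb{E}[\,\cdot\,|\mathcal{F}_{\tau}]$ term by term. By \eqref{Psupermartingale3} the first term yields $e^{-\beta\tau}P_{\tau}$, and the constant term yields $-c\,e^{-\beta\tau}/\beta$. It remains to show that the middle term contributes exactly $c\,e^{-\beta\tau}/\beta$, i.e.
\[
\mathbb{E}\Bigl[\int_{\tau}^{\infty}e^{-\beta t}\inf_{\tau\le u\le t}\frac{P_t}{P_u}\,dt\,\Big|\,\mathcal{F}_{\tau}\Bigr]=\frac{\kappa}{\beta}\,e^{-\beta\tau}.
\]
For this, set $t=\tau+s$ and factor out $e^{-\beta\tau}$; by the strong Markov property / stationary independent increments of $X$, the increments $\{X_{\tau+w}-X_{\tau}\}_{w\ge0}$ are independent of $\mathcal{F}_{\tau}$ with the law of $\{X_{w}\}_{w\ge0}$, so by the exponential form \eqref{P} the random variable $\inf_{0\le u\le s}(P_{\tau+s}/P_{\tau+u})$ is, conditionally on $\mathcal{F}_{\tau}$, distributed as the unconditional $\inf_{0\le u\le s}(P_{s}/P_{u})$. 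Writing $g(s):=\mathbb{E}[\inf_{0\le u\le s}(P_{s}/P_{u})]$ (a deterministic, positive function, whence $\kappa>0$ and $b$ in \eqref{ab} is well defined) and applying Tonelli, the left-hand side equals $e^{-\beta\tau}\int_{0}^{\infty}e^{-\beta s}g(s)\,ds$; on the other hand, conditioning on the value of the independent exponential time $\tau_{\beta}$ in \eqref{kappa} gives $\kappa=\int_{0}^{\infty}\beta e^{-\beta s}g(s)\,ds$, whence $\int_{0}^{\infty}e^{-\beta s}g(s)\,ds=\kappa/\beta$. Summing the three contributions gives $e^{-\beta\tau}P_{\tau}$, which is \eqref{backlinear}.

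Finally, I would reconcile \eqref{backlinear} with the hypothesis of Proposition \ref{baseandoptimalcontrol}: since $P$ (hence $\ell^{*}$) has at most countably many jumps, for Lebesgue-a.e.\ $t$ one has $\sup_{\tau\le u<t}\ell^{*}_u=\sup_{\tau\le u\le t}\ell^{*}_u$, so \eqref{backwardgeneric} with $\varepsilon=0$ holds as well. Admissibility of \eqref{optimalcontrol} follows from the deterministic bound $\ell^{*}_t\le-\gamma^{-1}\ln a=:L$ (valid since $a+b/P_t>a>0$), which gives $\overline{\nu}^{*}_t\le L^{+}$ and hence $\mathbb{E}[\int_{0}^{\infty}e^{-\beta t}P_t\overline{\nu}^{*}_t\,dt]\le L^{+}\,\mathbb{E}[\int_{0}^{\infty}e^{-\beta t}P_t\,dt]=L^{+}/(\beta-\delta)<\infty$ by \eqref{Psupermartingale2}; thus $\overline{\nu}^{*}\in\mathcal{S}$ and Proposition \ref{baseandoptimalcontrol} delivers optimality. (When $a\ge1$, equivalently $\beta-\delta\ge\lambda(\alpha+\alpha_p)$, one has $L\le0$, so $\overline{\nu}^{*}\equiv0$, in agreement with Proposition \ref{noinvestprop}.) The step I expect to be the crux is the evaluation of the running-infimum term: one must notice that the supremum of $\ell^{*}$ telescopes into a running infimum of $1/P$, that this is exactly the functional built into the constant $\kappa$, and then carry out the strong-Markov reduction together with the randomization over the exponential horizon $\tau_{\beta}$; the rest — the cancellation via \eqref{ab} and the bookkeeping of the three integrals — is routine.
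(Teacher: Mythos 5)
Your proposal is correct and follows essentially the same route as the paper: plug the ansatz into the backward equation, use that the running supremum of $\ell^*$ telescopes into $\inf_{\tau\le u\le t}(a+b/P_u)$, evaluate the resulting terms via \eqref{Psupermartingale3} and the stationary-independent-increments/exponential-randomization identity for $\kappa$, and then check admissibility via the bound $\ell^*_t\le-\gamma^{-1}\ln a$ so that Proposition \ref{baseandoptimalcontrol} applies. The only differences (simplifying the coefficients through \eqref{ab} before rather than after integrating, and your explicit remark that the $\sup_{\tau\le u<t}$ versus $\sup_{\tau\le u\le t}$ discrepancy is Lebesgue-negligible) are cosmetic refinements of the paper's argument.
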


\begin{proof}
Substitute \eqref{Piderivative} in \eqref{backlinear}, make a change of variable in the integral, and use \eqref{Psupermartingale3} to obtain
\begin{eqnarray}
\label{back1}
\lefteqn{\mathbb{E}\bigg[\int_{\tau}^{\infty} \Gamma_y(t,\sup_{\tau \leq u \leq t}\ell^*_u) dt \Big|\mathcal{F}_{\tau}\bigg]  = e^{-\beta \tau} \mathbb{E}\bigg[\int_{0}^{\infty}e^{-\beta s}\Big(\lambda\alpha_s P_{s+\tau} - c\Big) ds\Big|\mathcal{F}_{\tau}\bigg]}   \\
& & \hspace{3.5cm}+\lambda(\alpha_p + \alpha -\alpha_s)e^{-\beta \tau}\mathbb{E}\bigg[\int_{0}^{\infty}e^{-\beta s}P_{s+\tau}\,e^{-\gamma \sup_{0 \leq u \leq s}(\ell^*_{u+\tau})}ds\Big|\mathcal{F}_{\tau}\bigg]  \nonumber\\
& & =e^{-\beta \tau}\Big(\frac{\lambda\alpha_s}{\beta-\delta} P_{\tau} - \frac{c}{\beta}\Big) + \lambda(\alpha + \alpha_p -\alpha_s)e^{-\beta \tau}\mathbb{E}\bigg[\int_{0}^{\infty}e^{-\beta s}P_{s+\tau}\,e^{\inf_{0 \leq u \leq s}(-\gamma\ell^*_{u+\tau})}ds\Big|\mathcal{F}_{\tau}\bigg]. \nonumber
\end{eqnarray}
Therefore we may rewrite \eqref{backlinear} in the equivalent form
\begin{eqnarray}
\label{equivalentback}
&& \hspace{-1.5cm} \mathbb{E}\bigg[\int_{0 }^{\infty}e^{-\beta s}P_{s+\tau}\,e^{\inf_{0 \leq u \leq s}(-\gamma\ell^*_{u+\tau})}ds\Big|\mathcal{F}_{\tau}\bigg] =
\frac{1}{\lambda(\alpha + \alpha_p -\alpha_s)}\Big[P_{\tau}\Big(1 - \frac{\lambda\alpha_s}{\beta-\delta}\Big) + \frac{c}{\beta}\Big].
\end{eqnarray}
Now we make a guess for the solution and we try with
\beq
\label{ansatz}
\ell^*_t = -\frac{1}{\gamma}\ln\Big(a + \frac{b}{P_t}\Big),
\eeq
for some positive $a$ and $b$. In fact the the left-hand side of \eqref{equivalentback} becomes
\begin{eqnarray}
\label{equivalentback2}
\lefteqn{\mathbb{E}\bigg[\int_{0}^{\infty}e^{-\beta s}P_{s+\tau}\,e^{\inf_{0 \leq u \leq s}\ln\big(a + \frac{b}{P_{u+\tau}}\big)}\,ds\Big|\mathcal{F}_{\tau}\bigg]} \nonumber \\
&=& \mathbb{E}\bigg[\int_{0}^{\infty}e^{-\beta s}P_{s+\tau}\,e^{\ln\big(\inf_{0 \leq u \leq s}(a + \frac{b}{P_{u+\tau}})\big)}\,ds\Big|\mathcal{F}_{\tau}\bigg]  \nonumber \\
&=&  \mathbb{E}\bigg[\int_{0}^{\infty}e^{-\beta s}P_{s+\tau}\,\inf_{0 \leq u \leq s}\Big(a + \frac{b}{P_{u+\tau}}\Big)\,ds\Big|\mathcal{F}_{\tau}\bigg]  \\
&=&  a\,\mathbb{E}\bigg[\int_{0}^{\infty}e^{-\beta s}P_{s+\tau}\,ds\Big|\mathcal{F}_{\tau}\bigg] + b\,\mathbb{E}\bigg[\int_{0}^{\infty}e^{-\beta s}\,\inf_{0 \leq u \leq s}\Big(\frac{P_{s+\tau}}{P_{u+\tau}}\Big)\,ds\Big|\mathcal{F}_{\tau}\bigg]  \nonumber \\
&=&  \frac{aP_{\tau}}{\beta-\delta} + \frac{b}{\beta}\mathbb{E}\bigg[\int_{0}^{\infty}\beta e^{-\beta s}\,\inf_{0 \leq u \leq s}\Big(\frac{P_{s}}{P_{u}}\Big)\,ds\bigg] \ \ =\ \ \frac{aP_{\tau}}{\beta-\delta} + \frac{b}{\beta}\kappa, \nonumber
\end{eqnarray}
therefore \eqref{equivalentback} holds (cf. \eqref{ab}).

Now optimality of \eqref{optimalcontrol} follows from Proposition \ref{baseandoptimalcontrol} if we show that $\overline{\nu}^*$ is admissible.
Clearly $\overline{\nu}^*$ is $\{\mathcal{F}_t\}$-adapted and left-continuous. Also
$\overline{\nu}^*_t \leq (-\frac{1}{\gamma}\ln(a) - y) \vee 0$ by monotonicity of $\ln$ and $b>0$. Therefore
$\mathbb{E}\big[\int_{0}^{\infty}e^{- \beta t} P_t \overline{\nu}^*_t dt \big] \leq \frac{(-\frac{1}{\gamma}\ln(a) - y) \vee 0}{\beta-\delta}<\infty$ by \eqref{Psupermartingale2}, and hence $\overline{\nu}^* \in \mathcal{S}$.
\end{proof}
\noindent The result of Proposition \ref{backlinearcostsprop} is remarkable in its own right. In fact, to the best of our knowledge, it is one of the rare examples of explicit solution to a backward stochastic equation like \eqref{backwardgeneric} involving the function $\Gamma_y(t,\cdot)$ (cf.\ \eqref{Piderivative}) not satisfying the classical Inada conditions.
\begin{remark}
Notice that if $\beta-\delta \geq \lambda(\alpha_p + \alpha)$, then $a \geq 1$ (cf.\ \eqref{ab}) and therefore $\ell^*_t <0$ for any $t \geq 0$ (cf.\ \eqref{baselinear}). It thus follows from \eqref{optimalcontrol} that $\overline{\nu}^*_t=0$ for all $t\geq 0$, and this is in line with Proposition \ref{noinvestprop}.
\end{remark}
In order to obtain explicitly the constant $\kappa$ of \eqref{kappa}, we now restrict the price $P$ dynamics to the class
of L\'{e}vy processes with no positive jumps.

\begin{proposition}
\label{propkappa}
Assume that $X$ is a L\'{e}vy process with no positive jumps if $\zeta>0$ and with no negative jumps if $\zeta<0$. Set
\beq
\label{Xtilde}
\widetilde{X}_u:=-\delta u + \pi(-\zeta)u + \zeta X_u,
\eeq
so that $P_t=e^{-\tilde{X}_t}$ (cf.\ \eqref{P}), and denote by $\widetilde{\pi}(\cdot)$ the Laplace exponent of $\widetilde{X}$. Then the constant $\kappa$ of \eqref{kappa} is
\beq
\label{kappa2}
\kappa =  \frac{\xi}{1+\xi},
\eeq
with $\xi$ uniquely determined by the equation $\widetilde{\pi}(\xi)=\beta$.
\end{proposition}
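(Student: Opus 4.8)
The plan is to express $\kappa$ through the running supremum of the spectrally one-sided L\'evy process $\widetilde X$ sampled at the independent exponential time $\tau_{\beta}$, and then to invoke the classical fact that, for a L\'evy process with no positive jumps, such a supremum is exponentially distributed with rate given by the right inverse of its Laplace exponent.

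First I would rewrite the quantity inside the expectation in \eqref{kappa}. Since $P_t = e^{-\widetilde X_t}$ (cf.\ \eqref{Xtilde} and \eqref{P}), for $0 \le u \le \tau_{\beta}$ one has $P_{\tau_{\beta}}/P_u = e^{-(\widetilde X_{\tau_{\beta}} - \widetilde X_u)}$, hence
$$
\inf_{0 \le u \le \tau_{\beta}}\Big(\frac{P_{\tau_{\beta}}}{P_u}\Big) = \exp\Big(-\big(\widetilde X_{\tau_{\beta}} - \underline{\widetilde X}_{\tau_{\beta}}\big)\Big), \qquad \underline{\widetilde X}_t := \inf_{0 \le s \le t}\widetilde X_s .
$$
By the duality (time-reversal) property of L\'evy processes, for every fixed $t \ge 0$ the reflected variable $\widetilde X_t - \underline{\widetilde X}_t$ has the same law as the running supremum $\overline{\widetilde X}_t := \sup_{0 \le s \le t}\widetilde X_s$; conditioning on $\tau_{\beta}$ and using the independence of $\tau_{\beta}$ and $P$ (hence of $\tau_{\beta}$ and $\widetilde X$), this yields $\kappa = \mathbb{E}\big[e^{-\overline{\widetilde X}_{\tau_{\beta}}}\big]$.

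Next I would observe that, under the stated assumption, $\widetilde X$ has no positive jumps: if $\zeta > 0$ then $X$, and therefore $\zeta X$, has no positive jumps, while if $\zeta < 0$ then $X$ has no negative jumps and so $\zeta X$ has no positive jumps; the deterministic drift $(\pi(-\zeta) - \delta)\,u$ in \eqref{Xtilde} does not alter the jump structure. For such a spectrally negative process the Laplace exponent $\widetilde\pi$ is finite on $[0,\infty)$, convex, with $\widetilde\pi(0) = 0$; excluding the trivial case in which $\widetilde X$ is a decreasing process (where $\overline{\widetilde X}\equiv 0$ and $\kappa = 1$), one has $\widetilde\pi(\xi) \to +\infty$, so $\widetilde\pi(\xi) = \beta$ has a unique strictly positive root $\xi$ (the right inverse $\Phi(\beta)$ of $\widetilde\pi$). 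Since $\widetilde X$ has no positive jumps it creeps across upper levels, so the first-passage times $T_x := \inf\{t \ge 0 : \widetilde X_t > x\}$ satisfy $\mathbb{E}[e^{-\beta T_x}] = e^{-\xi x}$ — this follows from optional stopping applied to the exponential martingale $e^{\xi \widetilde X_{t} - \widetilde\pi(\xi) t}$ — whence, by independence of $\tau_{\beta}$,
$$
\mathbb{Q}\big(\overline{\widetilde X}_{\tau_{\beta}} > x\big) = \mathbb{Q}\big(T_x < \tau_{\beta}\big) = \mathbb{E}\big[e^{-\beta T_x}\big] = e^{-\xi x}, \qquad x \ge 0,
$$
i.e.\ $\overline{\widetilde X}_{\tau_{\beta}}$ is exponentially distributed with parameter $\xi$.

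Finally, integrating, $\kappa = \mathbb{E}\big[e^{-\overline{\widetilde X}_{\tau_{\beta}}}\big] = \int_0^{\infty} e^{-x}\,\xi e^{-\xi x}\,dx = \xi/(1+\xi)$, which is \eqref{kappa2}. I expect the delicate part to be the rigorous handling of the two fluctuation-theoretic ingredients — the time-reversal identity $\widetilde X_t - \underline{\widetilde X}_t \stackrel{d}{=} \overline{\widetilde X}_t$ (equivalently, the Wiener--Hopf factorization at the exponential time $\tau_{\beta}$) and the exponentiality of the supremum of a spectrally negative L\'evy process sampled at an independent exponential time — together with the standard but slightly fussy identifications of suprema and infima of a c\`{a}dl\`{a}g path with those of its left limits, and the separate (trivial) disposal of the degenerate case of monotone paths.
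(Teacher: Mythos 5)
Your proof is correct and follows essentially the same route as the paper: rewrite $\kappa$ in terms of $\widetilde{X}$, use duality to replace $\widetilde{X}_{\tau_\beta}-\inf_{u\le\tau_\beta}\widetilde{X}_u$ by $\sup_{u\le\tau_\beta}\widetilde{X}_u$, and use the exponential law (with parameter $\xi$ solving $\widetilde{\pi}(\xi)=\beta$) of the supremum of a spectrally negative L\'evy process at an independent exponential time. The only difference is that you re-derive this last fact via first passage and the exponential martingale, where the paper simply cites Bertoin (1996), Chapter VII.
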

\begin{proof}
Starting from \eqref{kappa} we have
\begin{eqnarray*}
\kappa \hspace{-0.25cm} & = & \hspace{-0.25cm} \mathbb{E}\Big[e^{\inf_{0 \leq u \leq \tau_{\beta}}(\widetilde{X}_u - \widetilde{X}_{\tau_{\beta}})}\Big] = \mathbb{E}\Big[e^{-\widetilde{X}_{\tau_{\beta}} - \sup_{0 \leq u \leq \tau_{\beta}}(-\widetilde{X}_u)}\Big] \nonumber \\
\hspace{-0.25cm} & = & \hspace{-0.25cm} \mathbb{E}\Big[e^{\inf_{0 \leq u \leq \tau_{\beta}}(-\widetilde{X}_u)}\Big] = \mathbb{E}\Big[e^{-\sup_{0 \leq u \leq \tau_{\beta}}(\widetilde{X}_u)}\Big] = \frac{\xi}{1+\xi}, \nonumber
\end{eqnarray*}
since $-\widetilde{X}_{\tau_{\beta}} - \sup_{0 \leq u \leq \tau_{\beta}}(-\widetilde{X}_u) \sim \inf_{0 \leq u \leq \tau_{\beta}}(-\widetilde{X}_u)$ in distribution by the Duality Theorem for L\'{e}vy processes, and $\sup_{0 \leq u \leq \tau_{\beta}}(\widetilde{X}_u)$ is exponentially distributed with parameter $\xi$, with $\xi$ as defined above (cf. Bertoin, 1996, Chapter VII), due to the assumption of no positive jumps for $\widetilde{X}$.
\end{proof}

\begin{remark}
\label{remarkdoppijumpi}
Similar findings in the case of positive and negative jumps might be obtained by using the results in Kou and Wang (2003) on double exponential jump diffusion processes.
\end{remark}

\subsection{A Probabilistic Representation of the Value Function}
\label{valuefunctiolinearsection}

In the special setting of Section \ref{examples} we are able to provide a probabilistic representation of the value function \eqref{W}.
\begin{proposition}
\label{prop:valuefunction}
Let $\tau_{\beta-\delta}$ and $\tau_{\beta}$ be two independent, exponentially distributed random times with parameters ${\beta-\delta}$ and $\beta$, respectively. Then the value function \eqref{W} admits the representation
\begin{eqnarray}
\label{valuefunctionlinearcosts}
W(y) &\hspace{-0.25cm} = \hspace{-0.25cm}&  y + \frac{\lambda\alpha}{\gamma(\beta-\delta)} - \frac{\lambda (\alpha_p + \alpha - \alpha_s)}{\gamma(\beta-\delta)} \ \widetilde{\mathbb{E}}\Big[e^{-\gamma (y + \overline{\nu}^*_{\tau_{\beta-\delta}})}\Big]\nonumber \\
& & \hspace{0.2cm}  + \left(\frac{\lambda \alpha_s}{\beta - \delta}-1\right) \widetilde{\mathbb{E}}\Big[y + \overline{\nu}^*_{\tau_{\beta-\delta}} \Big]
-  \frac{c}{\beta}\mathbb{E}\Big[y + \overline{\nu}^*_{\tau_{\beta}}\Big],
\end{eqnarray}
in terms of the optimal control $\overline{\nu}^*$ of \eqref{optimalcontrol} and the expectations $\mathbb{E}[\cdot]$, and $\widetilde{\mathbb{E}}[\cdot]$  under the equivalent martingale measure of \eqref{Ptilde}.
\end{proposition}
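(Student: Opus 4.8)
The plan is to evaluate $W(y)=\hat{\mathcal{J}}(y,\overline{\nu}^{*})$ directly, where $\overline{\nu}^{*}$ is the policy \eqref{optimalcontrol}, optimal by Proposition \ref{backlinearcostsprop}, by inserting the explicit linear-case expressions for $H$ and $\Gamma$ (cf.\ \eqref{Hlin}, \eqref{Pilin}, with $y$ replaced by $y+\overline{\nu}^{*}_t$) into \eqref{jhat}, and then rewriting each resulting time-integral as an expectation sampled at an independent exponential clock. First I would apply Lemma \ref{lemmaFubini} to replace the ordering term $\mathbb{E}\big[\int_{[0,\infty)}e^{-\beta t}P_t\,d\overline{\nu}^{*}_t\big]$ by $(\beta-\delta)\,\mathbb{E}\big[\int_0^{\infty}e^{-\beta t}P_t\overline{\nu}^{*}_t\,dt\big]$, so that $W(y)$ becomes a single $dt$-integral weighted by $e^{-\beta t}$. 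Expanding $\Gamma$ then produces four integrand types: $e^{-\beta t}P_t(y+\overline{\nu}^{*}_t)$ (the $\lambda\alpha_s$-part of $H$, together with the ordering term after Lemma \ref{lemmaFubini}), $e^{-\beta t}P_t$ (the constant $\alpha/\gamma$-part of $H$), $e^{-\beta t}P_t\,e^{-\gamma(y+\overline{\nu}^{*}_t)}$ (the exponential part of $H$), and $e^{-\beta t}c\,(y+\overline{\nu}^{*}_t)$ (the linear holding cost).

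Next I would convert each type. For the three carrying a factor $P_t$, I would split $e^{-\beta t}P_t=e^{-(\beta-\delta)t}\,(e^{-\delta t}P_t)$ and recall that $e^{-\delta t}P_t$ is exactly the density $\frac{d\widetilde{\mathbb{Q}}}{d\mathbb{Q}}\big|_{\mathcal{F}_t}$ of \eqref{Ptilde}, a true $\{\mathcal{F}_t\}$-martingale by the L\'evy structure of $P$ (cf.\ Definition \ref{assX}); hence, for any bounded adapted $g_t$,
\[
\mathbb{E}\Big[\int_0^{\infty}e^{-\beta t}P_t g_t\,dt\Big]=\int_0^{\infty}e^{-(\beta-\delta)t}\widetilde{\mathbb{E}}[g_t]\,dt=\frac{1}{\beta-\delta}\,\widetilde{\mathbb{E}}\big[g_{\tau_{\beta-\delta}}\big],
\]
the interchanges being legitimate because $\overline{\nu}^{*}$ is deterministically bounded, $\overline{\nu}^{*}_t\le(-\tfrac1\gamma\ln a-y)\vee 0$ (cf.\ the proof of Proposition \ref{backlinearcostsprop}), so every integrand is bounded. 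Likewise $\mathbb{E}\big[\int_0^{\infty}e^{-\beta t}h_t\,dt\big]=\tfrac1\beta\,\mathbb{E}[h_{\tau_{\beta}}]$ for the holding-cost term, using that $\tau_{\beta}$ is independent of $P$ hence of $\overline{\nu}^{*}$; while the two purely deterministic $P_t$-integrals (the $y$-term created by the rearrangement below and the $\alpha/\gamma$-term) are evaluated by \eqref{Psupermartingale2} and equal $\tfrac1{\beta-\delta}$ each. It then remains to regroup the $P_t(y+\overline{\nu}^{*}_t)$ contributions by writing $\lambda\alpha_sP_t(y+\overline{\nu}^{*}_t)-(\beta-\delta)P_t\overline{\nu}^{*}_t=\big(\lambda\alpha_s-(\beta-\delta)\big)P_t(y+\overline{\nu}^{*}_t)+(\beta-\delta)P_t y$: the first summand gives $\big(\tfrac{\lambda\alpha_s}{\beta-\delta}-1\big)\widetilde{\mathbb{E}}[y+\overline{\nu}^{*}_{\tau_{\beta-\delta}}]$ and the second gives $y$; the $\alpha/\gamma$-term gives $\tfrac{\lambda\alpha}{\gamma(\beta-\delta)}$; the exponential term gives $-\tfrac{\lambda(\alpha_p+\alpha-\alpha_s)}{\gamma(\beta-\delta)}\,\widetilde{\mathbb{E}}[e^{-\gamma(y+\overline{\nu}^{*}_{\tau_{\beta-\delta}})}]$; and the holding cost gives $-\tfrac c\beta\,\mathbb{E}[y+\overline{\nu}^{*}_{\tau_{\beta}}]$. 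Summing these contributions yields exactly \eqref{valuefunctionlinearcosts}.

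I do not anticipate any genuine difficulty: the boundedness of $\overline{\nu}^{*}$ makes every Fubini interchange and every use of the change of measure immediate, and $e^{-\delta t}P_t$ being a true martingale makes the randomization by an independent exponential time legitimate. The only care needed is the bookkeeping — correctly merging the two sources of $P_t(y+\overline{\nu}^{*}_t)$-terms (from $H$ and, via Lemma \ref{lemmaFubini}, from the ordering cost), and keeping the two exponential clocks distinct, $\tau_{\beta-\delta}$ being the one that absorbs the density $e^{-\delta t}P_t$ (it accompanies a $P_t$ factor) while $\tau_{\beta}$ is attached to the holding cost, which carries no $P_t$ factor.
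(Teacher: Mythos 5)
Your proposal is correct and follows essentially the same route as the paper: evaluate $\hat{\mathcal{J}}(y,\overline{\nu}^*)$ with the linear-case $\Gamma$, use Lemma \ref{lemmaFubini} on the ordering term, and convert each $dt$-integral into an expectation at an independent exponential time via the measure $\widetilde{\mathbb{Q}}$ of \eqref{Ptilde} (for the terms carrying $P_t$) and at $\tau_\beta$ under $\mathbb{Q}$ (for the holding cost). The only difference is cosmetic — you regroup the $P_t(y+\overline{\nu}^*_t)$ contributions before converting, whereas the paper converts the five terms separately and recombines at the end.
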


\begin{proof}
Recall \eqref{Hlin}, \eqref{Pilin} and that $\beta=r+\lambda$. Then
\begin{eqnarray}
\label{optimalfunctional}
& & W(y)=\hat{\mathcal{J}}(y,\overline{\nu}^*)=\mathbb{E}\bigg[\int_0^{\infty}e^{-\beta t}\lambda P_t\Big[\alpha_s (y +\overline{\nu}^*_t) + \frac{\alpha}{\gamma} - \frac{(\alpha_p + \alpha - \alpha_s)}{\gamma}e^{-\gamma (y + \overline{\nu}^*_t)}\Big] dt \nonumber \\
& & \hspace{3.7cm}- c \int_0^{\infty}e^{-\beta t} (y + \overline{\nu}^*_t) dt - \int_0^{\infty} e^{-\beta t} P_t d\overline{\nu}^*_t\bigg].
\end{eqnarray}
By Lemma \ref{lemmaFubini} and by introducing $\tau_{\beta}$ and $\tau_{\beta-\delta}$, two independent, exponentially distributed random times with parameters $\beta$ and $\beta-\delta$, respectively,
arguments similar to those employed in the proof of Proposition \ref{backlinearcostsprop} allow us to rewrite the terms on the right hand side of \eqref{optimalfunctional} as follows,
$$\lambda\alpha_s\mathbb{E}\bigg[\int_0^{\infty}e^{-\beta t} P_t (y +\overline{\nu}^*_t) dt\bigg] = \frac{\lambda \alpha_s}{\beta - \delta} \widetilde{\mathbb{E}}\Big[y + \overline{\nu}^*_{\tau_{\beta-\delta}} \Big];$$
$$\lambda\frac{\alpha}{\gamma}\mathbb{E}\bigg[\int_0^{\infty}e^{-\beta t} P_t dt\bigg] = \frac{\lambda\alpha}{\gamma(\beta-\delta)};$$
$$\lambda\frac{(\alpha_p + \alpha - \alpha_s)}{\gamma}\mathbb{E}\bigg[\int_0^{\infty}e^{-\beta t} P_t e^{-\gamma (y + \overline{\nu}^*_t)}dt\bigg] = \frac{\lambda (\alpha_p + \alpha - \alpha_s)}{\gamma(\beta-\delta)}\widetilde{\mathbb{E}}\Big[e^{-\gamma (y + \overline{\nu}^*_{\tau_{\beta-\delta}})}\Big];$$
$$c \ \mathbb{E}\bigg[\int_0^{\infty}e^{-\beta t} (y + \overline{\nu}^*_t) dt\bigg] = \frac{c}{\beta}\mathbb{E}\Big[y + \overline{\nu}^*_{\tau_{\beta}}\Big];$$
$$\mathbb{E}\bigg[\int_0^{\infty} e^{-\beta t} P_t d\overline{\nu}^*_t\bigg] = (\beta-\delta)\mathbb{E}\bigg[\int_0^{\infty} e^{-\beta t} P_t \overline{\nu}^*_t dt\bigg] = \widetilde{\mathbb{E}}\Big[\overline{\nu}^*_{\tau_{\beta-\delta}}\Big].$$
Hence \eqref{valuefunctionlinearcosts} follows after some simple algebra.
\end{proof}

\begin{remark}
Notice that, if the process $\widetilde{X}$ of \eqref{Xtilde} has no negative jumps, the expectations in \eqref{valuefunctionlinearcosts} may be evaluated (at least numerically).
Indeed, recalling \eqref{optimalcontrol}, \eqref{baselinear} and $P_t=e^{-\tilde{X}_t}$ (cf.\ \eqref{P}), and letting $\tau_{\rho}$ be an independent, exponentially distributed random time with arbitrary (positive) parameter $\rho$, we have
\begin{eqnarray}
y + \overline{\nu}^*_{\tau_{\rho}} \hspace{-0.25cm} & =& \hspace{-0.25cm} y \vee \sup_{0 \leq u \leq \tau_{\rho}}\Big(-\frac{1}{\gamma}\ln\Big(a + \frac{b}{P_u}\Big)\Big) = y \vee \Big[-\frac{1}{\gamma}\ln\Big(\inf_{0 \leq u \leq \tau_{\rho}} \big(a + \frac{b}{P_u}\big)\Big)\Big] \\
\hspace{-0.25cm} & = & \hspace{-0.25cm} y \vee \Big[-\frac{1}{\gamma}\ln\Big( a + b \ e^{\inf_{0 \leq u \leq \tau_{\rho}} \widetilde{X}_u}\Big)\Big] = y \vee \Big[-\frac{1}{\gamma}\ln\Big( a + b \ e^{-\sup_{0 \leq u \leq \tau_{\rho}} (-\widetilde{X}_u)}\Big)\Big]
 \nonumber \\
\hspace{-0.25cm} & =: & \hspace{-0.25cm} y \vee \Big(-\frac{1}{\gamma}\ln\Big( a + b \ e^{-\widetilde{M}^{-}_{\tau_{\rho}}}\Big)\Big),
\end{eqnarray}
where $\widetilde{M}^{-}_t:=\sup_{0 \leq u \leq t} (-\widetilde{X}_u)$.
Now, since $-\widetilde{X}$ has no positive jumps, then $\widetilde{M}^{-}_{\tau_{\rho}}$ is exponentially distributed (cf. Chapter VII of Bertoin, 1996) with parameter $\xi$ uniquely determined by the equation $\widetilde{\pi}^{-}(\xi)=\rho$, where $\widetilde{\pi}^{-}(\xi)$ is the Laplace exponent of $-\widetilde{X}$ at $\xi$ (and clearly depends on the probability under which we are taking expectations, $\mathbb{Q}$ and $\widetilde{\mathbb{Q}}$).
\end{remark}

\subsection{Numerical Findings}

Here we provide computer drawings of the optimal inventory level for spot prices driven by a geometric Brownian motion or by an exponential jump-diffusion process. In order to do that, we find the explicit form of $\kappa$ (cf. \eqref{kappa2}), needed to obtain the base inventory $\ell^*$ (cf. \eqref{ab}, \eqref{baselinear}) and hence the optimal procurement strategy \eqref{optimalcontrol}. Then we compare the optimal expected return of our model with that of a modified version of the classical newsvendor model.\\
\indent In the computer drawings below we assume the following parameters: the firm's manager discount factor $r=0.05$, the initial inventory level $y=0$ (in line with the initial condition of the newsvendor model), the parameter of the exponentially distributed demand time $\lambda=5$, the unitary holding cost $c=1$, the premium factors in the revenue multiplier $G$ (see \eqref{gainT}) $\alpha=1.2, \alpha_p=0.8, \alpha_s=0.7$, the parameter of the exponentially distributed demand $\gamma=0.05$.

\subsubsection{Geometric Brownian Motion Price: the Optimal Strategy}
As in Guo et al.\ (2011), we take
\beq
\label{GBMdynamic}
\left\{
\begin{array}{ll}
dP_t=P_t(\mu dt+ \sigma dB_t)\\
P_0= 1,
\end{array}
\right.
\eeq
where $\mu \in \mathbb{R}$ and $\sigma>0$ are constants and $\{B_t, t\geq 0 \}$ is an exogenous one-dimensional standard Brownian motion.
Then
$$P_t=\exp\Big\{(\mu -\frac{\sigma^2}{2}) t + \sigma B_t\Big\},$$
which is of type \eqref{P} with $X := -B$, $\delta:=\mu$, $\zeta:=\sigma$ and $\pi(-\zeta)=\frac{\sigma^2}{2}$.
In this case $\widetilde{X}_u=(\frac{1}{2}\sigma^2 - \mu) u -\sigma B_u$, hence (cf. Proposition \ref{propkappa}, \eqref{kappa2})
\begin{eqnarray}
\label{kappaGBM}
\kappa=\mathbb{E}\Big[e^{-\sup_{0 \leq u \leq \tau_{\beta}}\widetilde{X}_u}\Big] = \frac{\theta_+}{1 + \theta_+},
\end{eqnarray}
where $\theta_+$ is the positive root of $\frac{\sigma^2}{2} x^2+(\frac{\sigma^2}{2}-\mu)x - \beta = 0$ (this is a well known result for a Brownian motion with drift).
\begin{figure}[h]
\begin{center}
\includegraphics[scale=0.6]{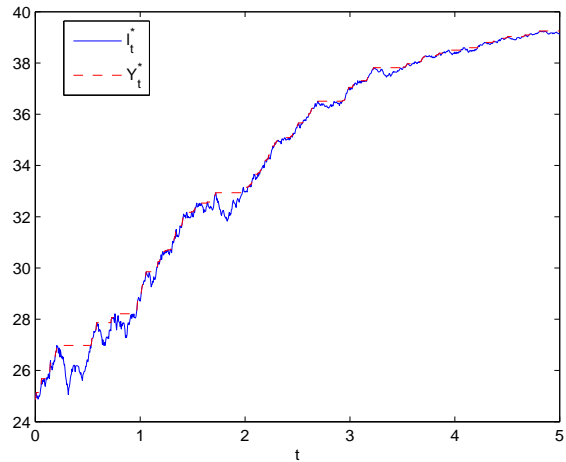}
\caption{Optimal procurement under geometric Brownian motion prices with $\mu=0.7$ and $\sigma=0.2$.}
\label{fig1}
\end{center}
\end{figure}
As shown in Figure \ref{fig1}, the commodity is purchased at times at which the current inventory becomes lower than the \textsl{base inventory} $\ell^*$ of \eqref{baselinear}.\bigskip

\subsubsection{Geometric Jump-Diffusion Price: the Optimal Strategy}
\label{subsubJump}
Since most spot prices exhibit significant skewness and kurtosis, they are not so well described by purely diffusive processes. Here we assume that the spot price evolves according to the \textsl{jump-diffusion} process
\beq
\label{Jumpdiffusiondynamics}
\left\{
\begin{array}{ll}
dP_t=P_{t^-}(\mu dt+ \sigma dB_t + dM_t)\\
P_{0^-}= 1,
\end{array}
\right.
\eeq
with $\mu$ and $\sigma$ real constants, $\{B_t, t\geq 0\}$ an exogenous one-dimensional standard Brownian motion, and
$$
dM_t:=\sum_{j=1}^{N_t}U_j,
$$
where $\{N_t, t\geq 0\}$ is a Poisson process with constant intensity $\psi \geq 0$, $\{U_j\}_{j \geq 0}$ is a sequence of i.i.d.\ random jumps with values in $[0,\infty)$ and such that $U_0=1$ a.s. Moreover, $B,N,U$ are independent.
Clearly this model includes deterministic growth models (for $\sigma = 0$) and pure jump models (for $\sigma = 0$ and $\psi > 0$).

The explicit solution of \eqref{Jumpdiffusiondynamics} is
$$\displaystyle P_t=\exp\Big\{(\mu-\frac{\sigma^2}{2})t+\sigma B_t\Big\} \prod_{j=1}^{N_t}(U_j + 1) = \exp\Big\{\displaystyle\big(\mu-\frac{\sigma^2}{2}\big)t+\sigma B_t+\sum_{j=1}^{N_t}Z_j\Big\}$$
where $Z_j:=\ln(U_j+1)$ are i.i.d. Hence $P_t=\exp\{-\widetilde{X}_t\}$ where $\widetilde{X}_t:=-(\mu-\frac{\sigma^2}{2}\big)t-\sigma B_t - \sum_{j=1}^{N_t}Z_j$ is
a L\'{e}vy process without positive jumps and with Laplace exponent
$$
\tilde{\pi}(u)=\frac{\sigma^2}{2} u^2+\big(\frac{\sigma^2}{2}-\mu\big)u+\psi\Big(\mathbb{E}(e^{-u Z_1})-1\Big),
$$
for any $u$ such that $E(e^{u \widetilde{X}_t})<\infty$. If we take the $Z_j$'s exponentially distributed with parameter $l>1$, then we get
$$
\tilde{\pi}(u)=\frac{\sigma^2}{2}u^2+u \Big(\frac{\sigma^2}{2}-\mu-\frac{\psi}{l+u}\Big),
$$
and by solving the equation $\widetilde{\pi}(\xi)=\beta$ for $\xi$, we may get $\kappa$ (cf. \eqref{kappa2}) and hence $\overline{\nu}^*$ (cf.\eqref{baselinear}).

\begin{figure}[h]
\begin{center}
\includegraphics[scale=0.6]{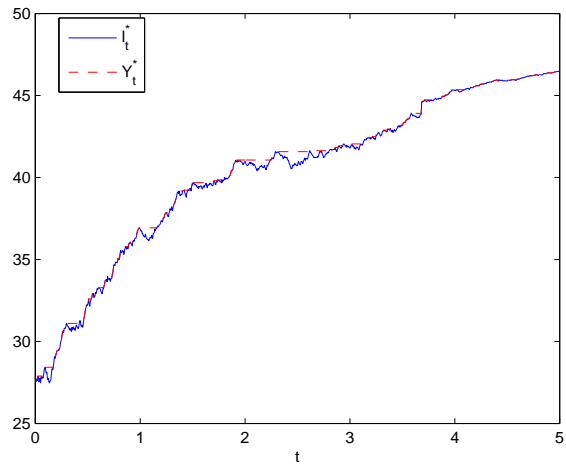}
\caption{Optimal inventory under geometric jump-diffusion prices with $\mu=0.7$, $\sigma=0.2$, $\psi=2$, $l=9$.}
\label{fig2}
\end{center}
\end{figure}

As it is shown in Figure \ref{fig2}, the optimal inventory is kept equal or higher than the \textsl{base inventory} \eqref{baselinear}, and purchases take place only when the current inventory becomes lower than the \textsl{base inventory}.

\subsubsection{Comparison with the Value Function of the Newsvendor Model}

The newsvendor model is a classical model in the literature on inventory management (see for example Porteus, 1990). It studies the problem of controlling the inventory of a single item with stochastic demand over a single period,
in the presence of overage and underage costs when the newsvendor orders too much or too little, respectively.
The newsvendor aims to choose the size of a single order that maximizes the expected profit.
As Guo et al.\ (2011) observe, the newsvendor model provides a solution for a firm's manager who is not interested in repeatedly buying in the spot market, but who purchases the commodity only once at time zero (the so called `newsvendor procurement strategy').
Such strategy belongs to our set of admissible strategies \eqref{admissibleset}. In the setting of this section (cf.\ beginning of Section \ref{examples}), the firm's total expected discounted return associated to the purchase of $y$ units of commodity at time zero is given (cf. \eqref{return}) by
\beq
\label{newsfunc}
L(y):=\mathbb{E}\bigg[e^{-r \Theta}P_{\Theta} \ G(y,D)-\big(1+\frac{c}{r}(1-e^{-r\Theta})\big)y\bigg],
\eeq
and the newsvendor value function is
$$\max_{y\geq0} L(y).$$
Recalling that $F_D(z)=1 - e^{-\gamma z}$, $\gamma>0$, simple calculations give that the optimal newsvendor procurement strategy is
\[
y^*= \max \Big{\{} 0, F^{-1}_D(\eta) \Big{\}},
\]
where
\[
\eta=\frac{(\alpha+\alpha_p)\mathbb{E}\big[e^{-r\Theta}P_\Theta \big]-\big(1+\frac{c}{r}(1-\mathbb{E}\big[e^{-r\Theta}\big])\big)}{(\alpha+\alpha_p-\alpha_s)\mathbb{E}\big[e^{-r\Theta}P_\Theta \big]}.
\]
Assuming the geometric Brownian motion of \eqref{GBMdynamic} with $\mu=0.7$ for the spot price, we use the probabilistic representation \eqref{valuefunctionlinearcosts}  to plot the difference between the value function $V(0)$ (cf. \eqref{equivalentvaluefunction0} with $y=0$) and the newsvendor value function $L(y^*)$ as the volatility $\sigma$ varies from $0.05$ to $50$ (see Figure \ref{fig3}). Notice that when $\sigma\rightarrow0$ our value function $V(0)$ tends to the newsvendor's one $L(y^*)$. On the other hand, when the volatility increases, the optimal revenue starts rapidly becoming larger than the newsvendor revenue. That means, the more the market gets riskier the more profitable the optimal dynamic procurement strategy becomes.
\begin{figure}
\begin{center}
\includegraphics[scale=0.6]{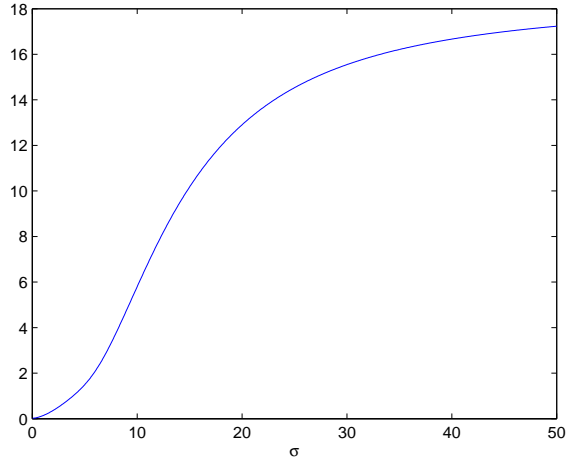}
\caption{Difference between $V(0)$ and $L(y^*)$  when $\sigma$ varies from $0.05$ to $50$.}
\label{fig3}
\end{center}
\end{figure}

\section{Conclusions and Future Research}
\label{conclusions}
We have studied the optimal procurement problem of a firm aiming at meeting a random demand at a random time $\Theta$. The firm buys inventory on a spot market and maximizes expected profits under exponential L\'{e}vy spot prices and general convex running holding costs. The problem is modeled as a monotone stochastic control problem in which the cumulative inventory up to time $t$ is the control.

We prove existence of an optimal procurement policy and we characterize it through stochastic first order conditions. As expected in singular stochastic control, it is optimal for the firm to invest in the spot market just enough to keep, at any time, the inventory level above a certain lower bound. In fact, in the present case the lower bound is the \textsl{base inventory} process, which is random and time dependent. \\
\indent In the case of linear holding costs and an exponentially distributed random demand, we obtain a closed form solution for the optimal procurement policy, which we use to provide computer drawings for two examples of exponential L\'{e}vy prices. In particular, for geometric Brownian motion prices Figure \ref{fig3} shows that, as the volatility increases, the optimal procurement strategy becomes more and more profitable than the static newsvendor's one.

The results of this paper are based on a quite recent and powerful stochastic first order condition approach, known in the mathematical finance literature (see, e.g., Bank \& Riedel, 2001; Riedel \& Su, 2011), but still rarely employed in the operational research context. In this sense our paper contributes to the literature on continuous time inventory management policies. The new method of solution provides new insights into the optimal procurement under L\'evy prices and general convex holding costs.

There are many directions in which it would be interesting to extend the present study as, for example, relaxing the independence assumption between prices and demand and introducing some form of correlation between them, or assuming diffusive dynamics also for the demand $D$ (that would give raise to a daunting three-dimensional singular stochastic control problem in $(P_t,D_t,Y_t)$).
It would be also interesting to allow the firm to buy and sell in the spot market, i.e. to allow bounded variation stochastic controls.
Finally, one could introduce fixed inventory ordering costs. That would naturally lead to a challenging impulse control problem in which a strategy of S--s type (see Porteus, 1990) is expected to be optimal.
More drastically, one could try to drop the Markovian setting. In such case, the stochastic first order conditions might still hold, but it would be almost impossible to find explicit solutions.

\vspace{1cm}
\indent \textbf{Acknowledgments.} The authors wish to thank two anonymous referees for their helpful comments on an earlier version of this paper.

\end{document}